\documentclass{amsart}
\usepackage {amsmath, amscd}
\usepackage {amssymb}
\usepackage{enumitem}




\numberwithin{equation}{section}

\linespread{1.15}
\def\<{\langle}
\def\>{\rangle}

\def\K{{\bf K}}

\def\DD{{\mathcal D}}
\def\EE{{\mathcal E}}

\def\HH{{\mathcal H}}
\def\II{{\mathcal I}}
\def\JJ{{\mathcal J}}
\def\KK{{\mathcal K}}
\def\LL{{\mathcal L}}

\def\XX{{\mathcal X}}
\def\YY{{\mathcal Y}}

\def\bbC{\mathbb{C}}

\def\bbN{\mathbb{N}}

\def\bbD{\mathbb{D}}
\def\bbT{\mathbb{T}}

\newcommand{\Toe}{\mathbf{T}}

\renewcommand{\<}{\langle}
\renewcommand{\>}{\rangle}

\newtheorem{lemma}{Lemma}[section]
\newtheorem{proposition}[lemma]{Proposition}
\newtheorem{theorem}[lemma]{Theorem}

\theoremstyle{definition}

\newtheorem{example}[lemma]{Example}

\title[Nonextreme de Branges--Rovnyak spaces as models]{Nonextreme de Branges--Rovnyak spaces as models for contractions}

\author{Javad Mashreghi}

\address{D\'epartement de Math\'ematiques et de Statistique, Universit\'e Laval, Qu\'ebec, QC, Canada G1K 7P4}

\email{javad.mashreghi@mat.ulaval.ca}

\author{Dan Timotin}

\address{Institute of Mathematics of the Romanian Academy, P.O. Box 1-764, Bucharest 
014700, Romania}
\email{Dan.Timotin@imar.ro}

\thanks{The first author is supported by a grant from NSERC. The second author is partially supported by a grant of the Romanian National Authority for Scientific
Research, CNCS Ð UEFISCDI, project number PN-II-ID-PCE-2011-3-0119.}

\begin{document}

\begin{abstract}
The de Branges--Rovnyak spaces are known to provide an alternate functional model for contractions on a Hilbert space, equivalent to the Sz.-Nagy--Foias model. The scalar de Branges--Rovnyak spaces $\HH(b)$ have essentially different properties, according to whether the defining function $b$ is or not extreme in the unit ball of $H^\infty$. For $b$ extreme the model space is just $\HH(b)$, while for $b$ nonextreme  an additional construction is required. In the present paper we identify the  precise class of contractions which have as a model $\HH(b)$ with $b$ nonextreme.
\end{abstract}

\maketitle

\section{Introduction}

In order to understand better operators on a Hilbert space, one often tries to find models for certain classes; that is, a subclass of concrete operators with the property that any given operator from the class is unitarily equivalent to an element of the subclass. The typical example is given by  normal operators, which by the spectral theorem have multiplication operators on Lebesgue spaces as models.

Going beyond normal operators, there is an extensive theory dealing with models
for contractions. The most elaborate form is the Sz.-Nagy--Foias
theory~\cite{SF}, that we will shortly describe in the next section. About the
same time another model had been  devised by de Branges and then developed in
detail in~\cite{BR1,BR}; its main feature was the extensive use of contractively
included subspaces. It turned out in the end that the models are equivalent; an
explanation of the relation can be found in~\cite{BK, NV}. One should also note
that these so called de Branges--Rovnyak spaces have received new attention in
the last years, representing an active area of research (see, for
instance,~\cite{A, BBH, BFM, CGR, CR,   J}).
There is also an upcoming book on the subject~\cite{FMb}.

In the theory developed by de Branges and Rovnyak, an important starting point
is provided by the so called scalar case, when the spaces involved are nonclosed
subspaces of the Hardy space $H^2$. These spaces are determined by a function
$b$ in the unit ball of $H^\infty$, and the usual notation is $\HH(b)$; later
their theory has been extensively developed~\cite{BBH0, F, FM1, FM2, LS1, LS2,
LS3, Sa1}, an important role being played by the basic monograph of
Sarason~\cite{Sa}. It turns out that the study splits quite soon in two disjoint
cases, according to whether $b$ is or not an extreme point of the unit ball of
$H^\infty$.

From the point of view of model operators, the scalar case corresponds to the situation when the defect spaces of the contraction (see next section for precise definitions) have dimension~1. An important difference appears between the two situations: when $b$ is extreme, the model space is $\HH(b)$ itself, and the model operator the backward shift; but when $b$ is not extreme, the model space contains pairs of functions, only the first one being in $\HH(b)$, and the model operator acts in a more complicated way.

A natural question then appears: in the nonextreme space, can one also view $\HH(b)$ itself     as a model space (and the backward shift as a model operator) for a certain class of contractions? The present paper answers this question in the affirmative: we give in Theorem~\ref{th:main} precise necessary and sufficient conditions for a contraction on a Hilbert space to be unitarily equivalent to the backward shift acting on some space $\HH(b)$ with $b$ nonextreme. However, we should add that the description is rather involved, and different rather distinct functions $b$ may lead to unitarily equivalent models.

The plan of the paper is the following. After giving the necessary preliminaries in Section~2, we proceed to find necessary conditions for a contraction $T$ to be unitarily equivalent to the backward shift acting on some $\HH(b)$ with $b$ nonextreme. Two of these are rather immediate (see Section~3), and a third one is not hard to find (this is done in Section~4). The last decisive fourth condition requires more work, its discussion being the content of Sections 5 and 6. The main result is stated in Section~7, while Section~8 discusses to what extent is the function $b$ determined by the contraction.

\section{Preliminaries}

\subsection{General notations}

We will use the standard notations $L^2$ for the Lebesgue space of square integrable functions on the unit circle $\bbT$ and $H^2$ for the Hardy space, which may be alternately considered either as a closed subspace of $L^2$ or a space of analytic functions in the unit disc $\bbD$. We will meet also their vector valued variants $L^2(\EE)$ and $H^2(\EE)$, with $\EE$ a Hilbert space. Multiplication with $e^{it}$ on $L^2$ will be denoted by $Z$ and its restriction to $H^2$ by $S$; for their analogues in the vector valued spaces we will use bold letters $\mathbf{Z}$ and $\mathbf{S}$ respectively  (the space $\EE$ can be deduced from the context). The action of these operators on the Fourier coefficients of a function explains why $\mathbf{Z}$ is also called the \emph{bilateral shift}  and $\mathbf{S}$ the \emph{unilateral shift}.

The Hardy algebra $H^\infty$ of all bounded analytic functions in $\bbD$ acts by multiplication on $H^2$; the corresponding operator valued objects are analytic functions in $\bbD$ with values in $\LL(\EE_1, \EE_2)$ (the linear bounded operators); they map $H^2(\EE_1)$ into $H^2(\EE_2)$. In fact, we will only meet contractive analytic functions $\Theta:\bbD\to \LL(\EE_1, \EE_2)$, whose values are contractions from $\EE_1$ to $\EE_2$. Such a function can be decomposed as 
\[
\Theta(\lambda)= \begin{pmatrix}
\Theta_0(\lambda) & 0\\ 0 & W
\end{pmatrix}: \EE_1'\oplus \EE_1''\to \EE_2'\oplus \EE_2'',
\]
where $\EE_i=\EE_i'\oplus \EE_i''$ ($i=1,2$), $W$ is a unitary constant, and $\Theta_0$ is \emph{pure}, that is, it has no constant part; $\Theta_0$ is called the \emph{pure part} of $\Theta$. 

\subsection{The Sz.-Nagy--Foias model and related questions}

If $H$ is a Hilbert space, we  denote by $\LL(H)$ the algebra of all bounded operators acting on $H$.
Let then $T\in\LL(H)$ be a contraction, that is, $\|T\|\le 1$. We define $D_T=(I-T^*T)^{1/2}$ and $\DD_T=\overline{D_T H}$. Obviously $T$ is unitary if and only if $\DD_T=\DD_{T^*}=\{0\}$. For a general contraction, there exists a unique decomposition $H=H_u\oplus H_c$, where $H_u$ and $H_c$ are invariant with respect to $T$ (and hence reducing), $T|H_u$ is unitary, while $T|H_c$ is \emph{completely nonunitary (c.n.u.)}; that is, it has no reducing space on which it is unitary.

A \emph{dilation} $\widehat T$ of $T$ is an operator acting on a space $\widehat H\supset H$, such that $P_H\widehat T^n|H=T^n$ for all $n\ge 0$. Such a dilation is \emph{minimal} if $\bigvee_{n\ge 0}\widehat T^nH=\widehat H$. Any dilation $\widehat T$ of $T$ ``contains'' a minimal one: it suffices to restrict $\widehat T$ to its invariant subspace spanned by $H$.

The Sz.-Nagy dilation theorem states that any contraction has a minimal isometric dilation, which is unique up to a unitary equivalence that is the identity on $H$; a similar result is true for minimal unitary dilations.

The structure of unitary operators can be rather well described by means of the spectral theorem. On the other hand, for a c.n.u. contraction a structure description is given by the ``model'' theory of Sz.-Nagy and Foias~\cite{SF} that we describe below. A central role is played by the notion of characteristic function. The characteristic function of a completely nonunitary contraction $T\in\LL(H)$ is the contractive valued analytic function $\Theta(\lambda):\DD_T\to \DD_{T^*}$, defined by
\[
\Theta(\lambda)=-T+\lambda D_{T^*}(I-\lambda T^*)^{-1}D_T|\DD_T,\quad \lambda\in\bbD.
\]
The main result states that $T$ is unitarily equivalent with its model $\mathbf{S}_{\Theta}\in \LL(\mathbf{K}_\Theta)$, defined as follows:
\begin{equation*}
\begin{split}
\mathbf{K}_\Theta&= (H^2(\DD_{T^*})\oplus \overline{(I-\Theta^*\Theta)^{1/2}L^2(\DD_T)})\ominus \{\Theta h\oplus (I-\Theta^*\Theta)^{1/2}h: h\in H^2(\DD_T)\},\\
\mathbf{S}_\Theta& = P_{\mathbf{K}_\Theta} (\mathbf{S}\oplus \mathbf{Z})| \mathbf{K}_\Theta.
\end{split}
\end{equation*}
Also, $\mathbf{K}_\Theta$ is invariant with respect to $\mathbf{S}^*\oplus \mathbf{Z}^*$, and so $\mathbf{S}_\Theta^*=\mathbf{S}^*\oplus \mathbf{Z}^*|\mathbf{K}_\Theta$.

An important particular case is obtained when $\dim\DD_T=\dim\DD_{T^*}=1$, and the characteristic function is a scalar \emph{inner} function $\theta$. The model space is then $K_\theta=H^2\ominus \theta H^2$, and we will call $S_\theta\in\LL(K_\theta)$ a \emph{scalar model operator}.

Two operator valued analytic functions $\Theta, \Theta'$ defined in $\bbD$ are said to \emph{coincide}  if there are unitaries $\tau, \tau'$ such that $\Theta'=\tau\Theta\tau'$. Then two completely nonunitary contractions are unitarily equivalent if and only if their characteristic functions coincide.

We will use the relation between invariant subspaces and characteristic functions developed in the general case in~\cite[Chapter VII]{SF}; since we do not need the general theory, we single out in Lemma~\ref{le:factorization} below the precise consequences that we will need. 
In short, if $H'\subset H$ is an invariant subspace with respect to $T$, the decomposition of $T$ with respect to $H'\oplus H'{}^\perp$ being
\[
T= \begin{pmatrix}
T_1 & X\\ 0 & T_2
\end{pmatrix},
\]
there is an associated factorization of the characteristic function $\Theta$ such that 
\begin{equation}\label{eq:factorization}
\Theta=\Theta_2\Theta_1,
\end{equation}
where the characteristic function of $T_i$ is the pure part of $\Theta_i$. Such factorizations satisfy a supplementary condition of regularity (see~\cite[Theorem VII.1.1]{SF});  conversely, any factorization that satisfies this condition is obtained in this way from an invariant subspace.

%

\begin{lemma}\label{le:factorization} Suppose $T\in\LL(H)$, $H'\subset H$ is invariant to $T$, and denote $T'=T|H'$.

\begin{enumerate}
\item

If $T$ has inner characteristic function $ \left(\begin{smallmatrix}
\phi_1\\ \phi_2
\end{smallmatrix}\right)$ and $T'$ has scalar characteristic function $\theta$, then $\theta$ is a common inner divisor of $\phi_1$ and $\phi_2$. Conversely, if $\theta$ is a common inner divisor of $\phi_1$ and $\phi_2$, then there exists $H'\subset H$, invariant to $T$, such that the characteristic function of $T':=T|H'$ is~$\theta$.

\item
 If $T$ has scalar characteristic function $\Theta$ and $T'$ is an isometry, then $T'$ is a shift of multiplicity~1.

\end{enumerate}

\end{lemma}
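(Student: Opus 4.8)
The plan is to feed the invariant subspace $H'$ into the factorization described around~\eqref{eq:factorization}. The restriction $T'=T|H'$ produces a \emph{regular} factorization $\Theta=\Theta_2\Theta_1$ of the characteristic function of $T$, with $\Theta_1\colon\DD_T\to\FF$ and $\Theta_2\colon\FF\to\DD_{T^*}$ for a suitable intermediate coefficient space $\FF$, and the characteristic function of $T'$ is the pure part of $\Theta_1$. All the work is then to read off the stated conclusions from the shape of $\Theta_1$ and $\Theta_2$, using that $\DD_T$ and/or $\DD_{T^*}$ are one-dimensional. Throughout I will use that the characteristic function of a completely nonunitary contraction is automatically pure.

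For part~(1), here $\DD_T=\bbC$ and $\DD_{T^*}=\bbC^2$, so $\Theta_1\colon\bbC\to\FF$. Since the characteristic function of $T'$ is the scalar (hence one-dimensional, pure) function $\theta$, the whole one-dimensional domain of $\Theta_1$ is absorbed by the pure part; thus $\Theta_1=\theta\colon\bbC\to\bbC$, $\FF=\bbC$, and $\Theta_2=\left(\begin{smallmatrix}\psi_1\\\psi_2\end{smallmatrix}\right)\colon\bbC\to\bbC^2$. Writing $\Theta=\Theta_2\theta$ componentwise gives $\phi_i=\psi_i\theta$, so $\theta$ divides both $\phi_i$; and since $\Theta$ is inner the factor $\theta$ is inner as well (on $\bbT$ one has $\|\Theta_1 u\|=\|u\|$ a.e.\ by squeezing $\|u\|=\|\Theta u\|\le\|\Theta_1 u\|\le\|u\|$). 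This proves $\theta$ is a common inner divisor. For the converse I would start from a common inner divisor $\theta$, set $\psi_i=\phi_i/\theta$, and note that $\Theta_2=\left(\begin{smallmatrix}\psi_1\\\psi_2\end{smallmatrix}\right)$ is again inner (compare boundary moduli, using $|\theta|=1$ a.e.). Since all three of $\Theta$, $\Theta_1=\theta$, $\Theta_2$ are then inner, every defect operator attached to the factorization vanishes, so the factorization $\Theta=\Theta_2\theta$ is automatically regular; the converse half of the correspondence in~\eqref{eq:factorization} then furnishes an invariant $H'$ whose restriction has characteristic function the pure part of $\theta$, that is, $\theta$ itself (for nonconstant $\theta$).

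For part~(2) I would first note that, as $T$ is completely nonunitary, $T'$ can have no unitary part: a subspace $H''$ on which $T$ restricts to something unitary would be reducing (if $T|H''$ is isometric and onto $H''$, then $D_T|H''=0$ and $T^*H''\subseteq H''$), contradicting complete nonunitarity. Hence $T'$ is a pure isometry, i.e.\ a unilateral shift of some multiplicity $m=\dim(H'\ominus TH')$, and its characteristic function is the null function $\mathbf{0}\colon\{0\}\to\bbC^m$. Consequently the pure part of $\Theta_1\colon\bbC\to\FF$ is trivial on the domain side, so $\Theta_1$ is a constant isometry and $\FF\cong\bbC^{m+1}$, while $\Theta_2\colon\bbC^{m+1}\to\bbC$ is a contractive row of $H^\infty$ functions. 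To pin down $m$ I would invoke regularity: since $\Theta_1$ is isometric its defect $\Delta_1$ vanishes, so the regularity identity collapses to $\overline{\Delta_2\Theta_1 L^2}=\overline{\Delta_2 L^2(\FF)}$, where $\Delta_2=(I-\Theta_2^*\Theta_2)^{1/2}$. The left-hand side has one-dimensional fibres (the range of $\Theta_1$ is a single line in $\FF$), whereas $\Theta_2^*\Theta_2$ is rank one a.e., so $\Delta_2$ has rank at least $m$; matching fibres then forces $m\le1$, and $m=1$ since $H'\ne\{0\}$.

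The main obstacle is this last step of part~(2): everything before it is bookkeeping about pure parts and defect spaces, but extracting $m=1$ genuinely needs the regularity of the factorization, not merely the algebraic relation $\Theta=\Theta_2\Theta_1$ (any $m$ is compatible with that relation alone). One must therefore carry the regularity condition through the rank computation for the row contraction $\Theta_2$. I note that a seemingly more elementary route---passing to the minimal isometric dilation $V$ (whose wandering space has dimension $\dim\DD_T=1$) and bounding the multiplicity of $V|H'$---runs into the difficulty that $V$ may carry a nontrivial unitary part, so that the naive inequality $\dim(H'\ominus VH')\le\dim(K_+\ominus VK_+)$ is not immediate; this is why I would organize the argument around the regular factorization instead.
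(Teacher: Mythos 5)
Your argument is correct, and for part (1) it is essentially the paper's own proof: both feed $H'$ into the correspondence with regular factorizations $\Theta=\Theta_2\Theta_1$, observe that the one-dimensional domain $\DD_T$ leaves no room for a constant unitary summand in $\Theta_1$, so $\Theta_1=\theta$ and $\phi_i=\psi_i\theta$; and for the converse both use that a factorization into inner factors is automatically regular (the paper cites Proposition VII.3.3 of \cite{SF}, you check directly that all defect operators vanish). Where you genuinely diverge is part (2): the paper disposes of it in one line by invoking the classification of regular factorizations of \emph{scalar} characteristic functions (Proposition VII.3.5 of \cite{SF}), whereas you rederive the needed fragment of that classification by hand --- Wold-decomposing $T'$ (correctly noting that complete nonunitarity of $T$ rules out a unitary part of $T'$), identifying $\Theta_1$ as a constant isometry $\bbC\to\bbC^{m+1}$, and then extracting $m\le 1$ from the collapsed regularity identity $\overline{\Delta_2\Theta_1L^2}=\overline{\Delta_2L^2(\FF)}$ by comparing fibre dimensions: at most $1$ on the left, at least $m$ on the right because $\Theta_2^*\Theta_2$ has rank at most one a.e. Your version is more self-contained and makes visible exactly where regularity is indispensable --- as you rightly stress, the algebraic relation $\Theta=\Theta_2\Theta_1$ alone puts no bound on $m$; the only price is that you tacitly use the standard fact that $\overline{\Delta_2L^2(\FF)}$ is the measurable-range space with fibres $\overline{\mathrm{ran}\,\Delta_2(e^{it})}$, which is precisely the bookkeeping that the cited proposition packages for you.
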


\begin{proof}
We give just a sketch of the proof, based on the results in~\cite{SF}. 

(1) If $\Theta= \left(\begin{smallmatrix}
\phi_1\\ \phi_2
\end{smallmatrix}\right)$, it follows from~\eqref{eq:factorization} that $\Theta_1$ is a column of scalars, and thus it has to be actually the scalar fuction $\theta$ (there is no room for a constant unitary). Therefore  
\begin{equation}\label{eq:facto phi-i theta}
\begin{pmatrix}
\phi_1\\ \phi_2
\end{pmatrix}=
\begin{pmatrix}
\psi_1\\ \psi_2
\end{pmatrix}\theta,
\end{equation}
whence $\theta $ is a common inner divisor for $\phi_1$ and $\phi_2$. 

Conversely, if $\theta $ is a common inner divisor for $\phi_1$ and $\phi_2$, then~\eqref{eq:factorization} is true for some $\psi_i$; also, the factorization~\eqref{eq:factorization} is regular, since all functions are inner~\cite[Proposition VII.3.3]{SF}. Therefore $\theta$ is the characteristic function of a restriction of $T$ to an invariant subspace.

(2) This part follows immediately from the description of all factorizations of scalar characteristic functions given in~\cite[Proposition VII.3.5]{SF}. 
\end{proof}

\subsection{de Branges--Rovnyak spaces}

Suppose $b\in H^\infty$, $\|b\|_\infty\le 1$, and $b$ is nonextreme; $\Delta=(1-|b|^2)^{1/2}$, $a$ is the outer function that satisfies $|a|=\Delta$. $S$ is the unilateral shift on $H^2$, $Z$  the bilateral shift on $L^2$. We use the notation $\tilde f(z)=\overline{f(\bar z)}$.

Denote by $\Toe_b$ the Toeplitz operator with symbol $b$. The de Branges--Rovnyak space $\HH(b)$ is defined to be the range of $(I-\Toe_b\Toe_b^*)^{1/2}$, with the norm given by
\[
\|(g\|_{\HH(b)}=\inf\{\|f\|_2:(I-\Toe_b\Toe_b^*)^{1/2}f=g\}.
\]
In particular, if $\ker (I-\Toe_b\Toe_b^*)^{1/2}=\{0\}$, then
\begin{equation}\label{eq:norm in h(b)}
\|(I-\Toe_b\Toe_b^*)^{1/2}f\|_{\HH(b)}=\|f\|_2.
\end{equation}

In the sequel we will suppose that $b$ is nonextreme.  
Since $\Toe_b\Toe_b^*\le \Toe_b^*\Toe_b$, we have, for each $f\in H^2$, 
\[
\|(I-\Toe_b\Toe_b^*)^{1/2}f\|_2^2=\|f\|_2^2-\|\Toe_b^* f\|_2^2
\ge \|f\|_2^2-\|\Toe_b f\|_2^2.
\]
But, if $b$ is nonextreme, then $|b|<1$ a.e., whence, for $f\not\equiv0$, $\|\Toe_b f\|_2<\|f\|_2$. Therefore $\ker (I-\Toe_b\Toe_b^*)^{1/2}=\{0\}$ and~\eqref{eq:norm in h(b)} is satisfied.

It is proved in~\cite[II-7]{Sa} that $\HH(b)$ is invariant with respect to $S^*$, which acts as a contraction on $\HH(b)$. This contraction is denoted by $X_b$; it will be the main character in the sequel, but only in disguise.

Some spaces that will appear in the sequel are:
\begin{align*}
\KK_b&=(H^2\oplus \overline{\Delta H^2}) \ominus \{ bh\oplus \Delta h: h\in H^2\},\\
\tilde\KK_b&=(H^2\oplus L^2) \ominus \{ bh\oplus \Delta h: h\in H^2\},\\
\JJ_b&=\tilde\KK_b\ominus \KK_b=\{0\}\oplus (L^2\ominus \overline{\Delta H^2}),\\
Y_b&= P_{\KK_b} (S^*\oplus Z^*)|\KK_b,\\
\mathbf{Y}_b&= S^*\oplus Z^*|\tilde\KK_b.
\end{align*}

A basic reason why we introduce them is the next lemma.

\begin{lemma}\label{le:dBR-NF}
The orthogonal projection onto the first coordinate is a unitary operator from $\KK_b$ onto $\HH(b)$, that intertwines $Y_b$ with $X_b$.
\end{lemma}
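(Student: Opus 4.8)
The plan is to realize $\KK_b$ as the orthogonal complement of the kernel of the first-coordinate projection on $\tilde\KK_b$, and to identify that projection with the square root defining $\HH(b)$. Write $\pi\colon\tilde\KK_b\to H^2$ for the restriction to $\tilde\KK_b$ of the orthogonal projection $f_1\oplus f_2\mapsto f_1$ onto the first coordinate. First I would record that the map $V\colon H^2\to H^2\oplus L^2$, $Vh=bh\oplus\Delta h$, is an isometry, since $|b|^2+\Delta^2=1$ a.e.\ gives $\|Vh\|^2=\|h\|_2^2$; hence its range $VH^2=\{bh\oplus\Delta h:h\in H^2\}$ is closed and $P_{\tilde\KK_b}=I-VV^*$.

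The key computation is to evaluate $\pi\pi^*$. Since the adjoint of $\pi$ is $\pi^*u=P_{\tilde\KK_b}(u\oplus 0)=(u\oplus 0)-VV^*(u\oplus 0)$, and a direct computation gives $V^*(u\oplus 0)=\Toe_b^*u$, the first coordinate of $\pi^*u$ equals $u-b\,\Toe_b^*u=(I-\Toe_b\Toe_b^*)u$; applying $\pi$ yields
\[
\pi\pi^*=I-\Toe_b\Toe_b^*.
\]
Now I would invoke the standard range-space principle (as used throughout Sarason's monograph): if two bounded operators $A,B$ into a common Hilbert space satisfy $AA^*=BB^*$, then their ranges coincide and carry the same range norm, and $A$ restricted to $(\ker A)^\perp$ is a unitary onto that range space with its quotient norm. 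Applying this with $A=\pi$ and $B=(I-\Toe_b\Toe_b^*)^{1/2}$ identifies $\operatorname{ran}\pi$, with the quotient norm, isometrically with $\HH(b)=\operatorname{ran}(I-\Toe_b\Toe_b^*)^{1/2}$.

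It then remains to check that $(\ker\pi)^\perp$ is exactly $\KK_b$. A vector $0\oplus f_2$ lies in $\tilde\KK_b$ iff $\langle f_2,\Delta h\rangle=0$ for all $h\in H^2$, i.e.\ iff $f_2\in L^2\ominus\overline{\Delta H^2}$; thus $\ker\pi=\JJ_b$ and $(\ker\pi)^\perp=\tilde\KK_b\ominus\JJ_b=\KK_b$. Consequently $\pi|\KK_b$ is a unitary from $\KK_b$ onto $\HH(b)$. For the intertwining I would first note that $VS=(S\oplus Z)V$, because $b(zh)=z(bh)$ and $\Delta(zh)=z(\Delta h)$; hence $VH^2$ is invariant under $S\oplus Z$ and $\tilde\KK_b$ is invariant under $S^*\oplus Z^*$. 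Therefore, for $f_1\oplus f_2\in\KK_b$ the vector $S^*f_1\oplus Z^*f_2$ already lies in $\tilde\KK_b$; writing $P_{\KK_b}=P_{\tilde\KK_b}-P_{\JJ_b}$ and using that $\JJ_b$ has vanishing first coordinate, so $\pi P_{\JJ_b}=0$, I obtain $\pi Y_b(f_1\oplus f_2)=\pi(S^*f_1\oplus Z^*f_2)=S^*f_1=X_b\,\pi(f_1\oplus f_2)$, which is the asserted relation $\pi Y_b=X_b\pi$.

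The main obstacle is the middle step: getting the operator identity $\pi\pi^*=I-\Toe_b\Toe_b^*$ correct and, above all, matching the $\HH(b)$-norm with the norm inherited by $\KK_b$. This hinges entirely on the range-space identification and on the fact that the minimal-norm preimage under $\pi$ lands in $(\ker\pi)^\perp=\KK_b$; once this is in place, injectivity, surjectivity onto $\HH(b)$, and the intertwining are routine bookkeeping.
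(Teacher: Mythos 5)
Your argument is correct, and it reaches the conclusion by a genuinely different route from the paper. The paper outsources the main content to Sarason \cite[IV-7]{Sa}: there the operator $B=\left(\begin{smallmatrix}\Toe_b\\-\Toe_a\end{smallmatrix}\right)$ is shown to be an isometry of $H^2$ into $H^2\oplus H^2$, the first-coordinate projection is known to be unitary from $(H^2\oplus H^2)\ominus BH^2$ onto $\HH(b)$ intertwining $S^*\oplus S^*$ with $X_b$, and the lemma is then obtained by conjugating with the unitary $W:\overline{\Delta H^2}\to H^2$, $W(\Delta h)=-ah$, which requires $a$ to be outer. You instead work directly in $H^2\oplus L^2$: the computation $\pi\pi^*=I-\Toe_b\Toe_b^*$ is right (the first coordinate of $P_{\tilde\KK_b}(u\oplus 0)=(u\oplus 0)-V\Toe_b^*u$ is indeed $u-b\Toe_b^*u=(I-\Toe_b\Toe_b^*)u$ since $b\Toe_b^*u\in H^2$), the identification $\ker\pi=\JJ_b$ and $(\ker\pi)^\perp=\KK_b$ is correct, and the range-norm principle ($AA^*=BB^*$ implies equality of range spaces with their range norms) then gives exactly the unitarity of $\pi|\KK_b$ onto $\HH(b)$ with the $\HH(b)$-norm, since that norm is by definition the range norm of $(I-\Toe_b\Toe_b^*)^{1/2}$. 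The intertwining argument is also sound: $S^*\oplus Z^*$ leaves $\tilde\KK_b$ invariant, and $P_{\JJ_b}$ kills the first coordinate, so $\pi Y_b=X_b\pi$. What your approach buys is self-containedness: it never introduces the auxiliary function $a$ or the space $H^2\oplus H^2$, and it makes transparent that the lemma is an instance of the complementation/range-norm formalism; the cost is that you must explicitly invoke that formalism, whereas the paper leans on the already-established Sarason result and only has to supply the unitary $W$.
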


\begin{proof}
The lemma is almost completely proved in~\cite[IV-7]{Sa}. It is shown therein that the operator
\[
B=\begin{pmatrix}
\Toe_b\\ -\Toe_a
\end{pmatrix}
\]
is an isometry from $H^2$ to $H^2\oplus H^2$, and that the projection $Q$ onto the first coordinate is a unitary from $\K_B=(H^2\oplus H^2)\ominus BH^2$ onto $\HH(b)$, which intertwines the restriction of $S^*\oplus S^*$ to this subspace with $X$. On the other hand, the map $W:\overline{\Delta H^2}\to H^2$ defined by $W(\Delta h)=-ah$ is easily seen to be an isometry, and it is actually unitary since $a$ is outer. It also commutes with $S$ and therefore, being unitary, with $S^*$.  Then $Q\circ (I_{H^2}\oplus W)$ yields the desired unitary operator.
\end{proof}

As a consequence, we will concentrate on  $Y_b$ rather than on $X_b$ in the rest of this paper.

The following result gathers some of the properties of the above spaces and operators. They constitute the basis for the ``model theory'' that will be investigated in the rest of the paper.

\begin{lemma}\label{le:Y_b isometries} Suppose $b\not\equiv0$. With the above notations, the following are true.
\begin{enumerate}
\item
We have  $\dim\DD_{Y_b}=2$, $\dim\DD_{Y_b^*}=1$, and $\dim\ker Y_b=1$. 
$Y_b$ is unitarily equivalent to $X_b=S^*|\HH(b)$.   Its characteristic function is 
\[
\Theta_{Y_b}= \begin{pmatrix}
\tilde a &\tilde b
\end{pmatrix}.
\]
Consequently, $Y_b\to 0$ strongly.
\item
$\mathbf{Y}^*_b$ is precisely the Nagy--Foias model corresponding to the characteristic function $ b$.

\item
$\mathbf{Y}_b$ is a nonisometric  dilation of $Y_b$; that is, it satisfies for all $n\in\bbN$ the relation $Y_b^n=P_{\KK_b}\mathbf{Y}_b^n|\KK_b$.

\item
$\mathbf{Y}_b|\JJ_b$ is an isometry. 
If $\XX\subset \tilde\KK_b$ is an invariant subspace for $\mathbf{Y}_b$, such that $\mathbf{Y}_b|\XX$ is an isometry, then $\XX\subset\JJ_b$. In particular, 
$\mathbf{Y}_b|\JJ_b$ is a maximal isometry contained in $\mathbf{Y}_b$, and
$Y_b$ has no isometric restriction.

\end{enumerate}

\end{lemma}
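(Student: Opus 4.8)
The plan is to deduce the whole lemma from one identification: that $\tilde\KK_b$ is exactly the Sz.-Nagy--Foias model space of the scalar function $b$. For the characteristic function $\Theta=b$ one has $(I-\Theta^*\Theta)^{1/2}=\Delta$ as a multiplication operator on $L^2$, and since $b$ is nonextreme $\Delta>0$ a.e., so $\overline{\Delta L^2}=L^2$. Substituting into the definition of $\mathbf{K}_\Theta$ gives $\mathbf{K}_b=(H^2\oplus L^2)\ominus\{bh\oplus\Delta h:h\in H^2\}=\tilde\KK_b$, and in the scalar case $\mathbf{S}=S$, $\mathbf{Z}=Z$, so the formula $\mathbf{S}_\Theta^*=\mathbf{S}^*\oplus\mathbf{Z}^*|\mathbf{K}_\Theta$ reads $\mathbf{S}_b^*=S^*\oplus Z^*|\tilde\KK_b=\mathbf{Y}_b$. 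Hence $\mathbf{Y}_b^*=\mathbf{S}_b$ is precisely the model for $b$, which is part~(2); this identification drives the rest.

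For the structural statements~(3) and~(4) I would first check that $\overline{\Delta H^2}$ is invariant under $Z$ (because $e^{it}\Delta h=\Delta(Sh)\in\Delta H^2$), so its orthogonal complement is $Z^*$-invariant and $\JJ_b=\{0\}\oplus(L^2\ominus\overline{\Delta H^2})$ is invariant under $\mathbf{Y}_b=S^*\oplus Z^*|\tilde\KK_b$; moreover $\mathbf{Y}_b|\JJ_b$ is the restriction of the unitary $Z^*$ to a $Z^*$-invariant subspace, hence an isometry. Since $\JJ_b$ is $\mathbf{Y}_b$-invariant, $\KK_b=\tilde\KK_b\ominus\JJ_b$ is coinvariant, and the standard compression identity for a coinvariant subspace (the cross term $P_{\KK_b}\mathbf{Y}_b P_{\JJ_b}\mathbf{Y}_b$ vanishes because $\mathbf{Y}_b\JJ_b\subset\JJ_b$) yields $Y_b^n=P_{\KK_b}\mathbf{Y}_b^n|\KK_b$; it is a nonisometric dilation because it is isometric only on the proper subspace $\JJ_b$, proving~(3). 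For the maximality in~(4), if $\XX\subset\tilde\KK_b$ is $\mathbf{Y}_b$-invariant with $\mathbf{Y}_b|\XX$ isometric, then for $x=(f,g)\in\XX$ the identity $\|\mathbf{Y}_b x\|^2=\|S^*f\|^2+\|g\|^2=\|x\|^2$ forces $\hat f(0)=0$; applying this to each $\mathbf{Y}_b^n x\in\XX$ gives $\hat f(n)=0$ for all $n$, so $f\equiv 0$. Thus $\XX\subset\{0\}\oplus L^2$, and intersecting with $\tilde\KK_b$ gives $\XX\subset\JJ_b$, so $\JJ_b$ is the maximal isometry.

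Part~(1) is where the real work lies. The unitary equivalence $Y_b\cong X_b$ is Lemma~\ref{le:dBR-NF}. To obtain the defect indices, the kernel, and the characteristic function I would transport the computation to $\HH(b)$: the $*$-defect operator $I-X_bX_b^*$ is the rank-one operator built from the reproducing kernel $k_0^b=1-\overline{b(0)}b$, giving $\dim\DD_{Y_b^*}=1$, while in the nonextreme case $I-X_b^*X_b$ has rank $2$, giving $\dim\DD_{Y_b}=2$ (these defect computations are carried out explicitly in~\cite{Sa}); the kernel is $\HH(b)\cap\bbC=\bbC\cdot1$, using that $1\in\HH(b)$ precisely because $b$ is nonextreme, so $\dim\ker Y_b=1$. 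With the defect spaces identified, substituting into the defining formula for the characteristic function yields the row $\Theta_{Y_b}=\begin{pmatrix}\tilde a&\tilde b\end{pmatrix}$, the tildes being the usual effect of the $\lambda\mapsto\bar\lambda$ convention and the two entries corresponding to the two orthogonal directions in $\DD_{Y_b}$, one coming from $b$ and one from $a$. Finally, on $\bbT$ one has $|\tilde a|^2+|\tilde b|^2=|a|^2+|b|^2=1$ a.e., so $\Theta_{Y_b}$ is $*$-inner; hence $Y_b\in C_{0\cdot}$, that is $Y_b^n\to0$ strongly. This also closes~(4): if $Y_b|\YY$ were isometric on a nonzero invariant $\YY$, then $\|y\|=\|Y_b^n y\|\to0$ for $y\in\YY$, a contradiction, so $Y_b$ has no isometric restriction.

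The main obstacle is the explicit determination of the characteristic function together with the index $\dim\DD_{Y_b}=2$: one must locate the second defect direction produced by $a$ and verify the exact placement and form of the entries (equivalently, understand conceptually how compressing away the multiplicity-one shift $\mathbf{Y}_b|\JJ_b$ raises the incoming defect index of the scalar $\tilde b$ to the row $\begin{pmatrix}\tilde a&\tilde b\end{pmatrix}$). Everything else is bookkeeping once the identification $\mathbf{Y}_b=\mathbf{S}_b^*$ and the invariance of $\JJ_b$ are in place.
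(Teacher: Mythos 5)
Your proposal is correct and follows essentially the same route as the paper: parts (2)--(4) are verified directly from the definitions (your maximality argument for (4) is the paper's argument almost verbatim), and part (1) defers the defect and characteristic-function computations to Sarason's book, exactly as the paper does. The only variation is your computation of $\ker Y_b$ via $\ker X_b=\ker S^*\cap\HH(b)=\bbC\cdot 1$ (using $1\in\HH(b)$ for $b$ nonextreme), where the paper instead argues in the $\KK_b$ coordinates that a kernel vector with vanishing first component must be zero; both are valid.
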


\begin{proof}

(1) The claimed properties of $Y_b$ are proved explicitely in~\cite[IV-7]{Sa} for $X_b$. The only exception is the dimension of $\ker Y_b$. Since $Y_b(\DD_{Y_b})\subset \DD_{Y_b^*}$, it has a nonzero kernel. If $f\oplus g\in \ker Y_b$, then $S^*f=0$, whence $f=c$ (constant). If we had two linearly independent vectors in $\ker Y_b$, some linear combination would have 0 as first coordinate, and thus we would have $0\oplus g_0\in \KK_b$ for some $g_0\in\overline{\Delta H^2}$, $g_0\not=0$. But the definition of $\KK_b$ implies $g_0\perp \Delta h$ for any $h\in H^2$, whence $g_0=0$, which is a contradiction.

(2) is an immediate consequence of the general form of the Sz.-Nagy--Foias model, while (3) follows easily from the fact that $\JJ_b$ is invariant with respect to $\mathbf{Y}_b$.

(4) It is immediate that $\mathbf{Y}_b|\JJ_b$ is an isometry, since it is unitarily equivalent to a restriction of $Z^*$. Then, if $\XX$ has the stated properties, take
$f\oplus g\in \XX$. We have 
\[
\|S^*{}^n f\|^2+\|Z^*{}^n g\|^2=\|\mathbf{Y}_b^n(f\oplus g)\|^2
=\|f\oplus g\|^2=\|f\|^2+\|g\|^2
\]
for any $n$. Since $Z^*$ is unitary and $S^*f\to 0$, this implies $f=0$, whence $\XX\subset \JJ_b$.
\end{proof}

It should be kept in mind that, according to Lemma~\ref{le:Y_b isometries}(2), the model operators in the de Branges--Rovnyak and Sz.Nagy--Foias approaches are mutual adjoints.

\section{A functional reformulation}

As noted in the introduction, we intend to find necessary and sufficient conditions for a c.n.u. contraction $T\in\LL(H)$ to be unitarily equivalent to $Y_b$ for some nonextreme function $b\in H^\infty$, $\|b\|\le 1$. Some necessary conditions follow already from Lemma~\ref{le:Y_b isometries}(1): we must have 

\smallskip

({\bf C1}) $\dim\DD_T=2, \dim\DD_{T^*}=\dim\ker T=1$, 

\smallskip

({\bf C2}) $T^n\to0$ strongly.
\smallskip

We will see later that (C1) and (C2) are not sufficient, but first we will use them in order to give an alternate formulation of the problem.

A general c.n.u. contraction $T $ with $\dim\DD_T=2, \dim\DD_{T^*}=1$ has as characteristic function an arbitrary pure contractive analytic function $\Theta_T:\bbD\to \LL(\bbC^2, \bbC)$
\begin{equation}\label{eq:Theta_T}
\Theta_T= \begin{pmatrix}
\phi_1& \phi_2
\end{pmatrix}.
\end{equation}
In this case the purity condition (which means that $\Theta_T$ has no constant unitary part) is equivalent to the fact that $\Theta_T$ does not coincide with the constant function
\[
\begin{pmatrix}
0 & \kappa
\end{pmatrix},
\]
where $\kappa\in\bbC$, $|\kappa|=1$.
Moreover, the condition $T^n\to 0 $ strongly is known to be equivalent to the identity $|\phi_1|^2+|\phi_2|^2=1$ (one says that $\Theta_T$ is \emph{$*$-inner}).

\begin{theorem}\label{pr:reformulation}
If $T$ is a c.n.u. contraction with characteristic function given by~\eqref{eq:Theta_T}, then the following are equivalent:
\begin{enumerate}
\item
$T$ is unitarily equivalent with $Y_b$ for some nonextreme b.
\item
$|\phi_1|^2+|\phi_2|^2=1$ a.e., and
there exist $\alpha_1, \alpha_2\in\bbC$ with $|\alpha_1|^2+|\alpha_2|^2=1$, such that  $\tilde a:=\alpha_1\phi_1+\alpha_2 \phi_2$ is an outer function.
\end{enumerate}
\end{theorem}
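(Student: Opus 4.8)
The plan is to prove the two implications separately, using the characteristic function $\Theta_{Y_b} = \begin{pmatrix} \tilde a & \tilde b\end{pmatrix}$ computed in Lemma~\ref{le:Y_b isometries}(1) together with the coincidence criterion for unitary equivalence of c.n.u.\ contractions.

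First I would prove $(1)\Rightarrow(2)$. Suppose $T$ is unitarily equivalent with $Y_b$ for some nonextreme $b$. By Lemma~\ref{le:Y_b isometries}(1), the characteristic function of $Y_b$ is $\begin{pmatrix}\tilde a & \tilde b\end{pmatrix}$, and since unitarily equivalent contractions have coinciding characteristic functions, there exist unitaries $\tau\colon\bbC\to\bbC$ and $\tau'\colon\bbC^2\to\bbC^2$ with $\begin{pmatrix}\phi_1 & \phi_2\end{pmatrix} = \tau\begin{pmatrix}\tilde a & \tilde b\end{pmatrix}\tau'$. The map $\tau$ is multiplication by a unimodular constant and $\tau'$ is a $2\times 2$ unitary matrix. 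Writing out this relation coordinatewise expresses $\begin{pmatrix}\tilde a & \tilde b\end{pmatrix}$ as a unitary linear combination of $\begin{pmatrix}\phi_1 & \phi_2\end{pmatrix}$. In particular, there are scalars $\alpha_1,\alpha_2$ coming from the first column of $\tau^{-1}(\tau')^{-1}$ (equivalently the adjoint relation) with $|\alpha_1|^2+|\alpha_2|^2=1$ such that $\tilde a = \alpha_1\phi_1+\alpha_2\phi_2$. Since $b$ is nonextreme, $a$ is outer by definition, and $\tilde a(z)=\overline{a(\bar z)}$ is then also outer (the reflection $f\mapsto\tilde f$ preserves outerness). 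The identity $|\phi_1|^2+|\phi_2|^2=1$ a.e.\ follows from the fact that $T^n\to 0$ strongly (which transfers from $Y_b$ via the unitary equivalence), combined with the stated equivalence between strong convergence to $0$ and $*$-innerness. The slightly delicate bookkeeping here is matching the indices and conjugations in the coincidence relation so that one genuinely extracts a \emph{single} outer function of the form $\alpha_1\phi_1+\alpha_2\phi_2$.

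For $(2)\Rightarrow(1)$, I would run the construction in reverse. Assume $|\phi_1|^2+|\phi_2|^2=1$ a.e.\ and that $\tilde a:=\alpha_1\phi_1+\alpha_2\phi_2$ is outer, with $|\alpha_1|^2+|\alpha_2|^2=1$. Complete $(\alpha_1,\alpha_2)$ to a $2\times 2$ unitary matrix $U$, and set $\tilde b := \beta_1\phi_1+\beta_2\phi_2$ where $(\beta_1,\beta_2)$ is the second row. Then $\begin{pmatrix}\tilde a & \tilde b\end{pmatrix} = \begin{pmatrix}\phi_1 & \phi_2\end{pmatrix}U^{t}$ (up to a harmless conjugate-transpose convention), so the two functions coincide in the sense of characteristic functions. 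Define $a = \widetilde{(\tilde a)}$ and $b = \widetilde{(\tilde b)}$; then $a$ is outer, and from $|\alpha_1|^2+|\alpha_2|^2=1$ together with unitarity of $U$ and the $*$-inner hypothesis one computes $|\tilde a|^2+|\tilde b|^2=1$ a.e., hence $|a|^2+|b|^2=1$, i.e.\ $|a|=(1-|b|^2)^{1/2}=\Delta$ and $b$ lies in the unit ball of $H^\infty$. The existence of such an outer $a$ with $|a|^2=1-|b|^2$ is exactly the condition that $b$ be nonextreme (a point of the unit ball of $H^\infty$ is nonextreme precisely when $\log(1-|b|^2)$ is integrable, equivalently when the outer function $a$ exists). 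Thus $b$ is nonextreme, and its associated operator $Y_b$ has characteristic function $\begin{pmatrix}\tilde a & \tilde b\end{pmatrix}$, which coincides with $\Theta_T$; by the coincidence criterion $T$ is unitarily equivalent to $Y_b$, giving $(1)$.

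The main obstacle I anticipate is not any single deep step but rather the careful handling of the conventions: the characteristic function is a row $\begin{pmatrix}\tilde a & \tilde b\end{pmatrix}$ acting $\bbC^2\to\bbC$, so ``coincidence'' involves a unitary on the two-dimensional \emph{domain} side and a unimodular scalar on the one-dimensional range side, and one must verify that absorbing the scalar and choosing the unitary correctly yields precisely a combination $\alpha_1\phi_1+\alpha_2\phi_2$ with the normalization $|\alpha_1|^2+|\alpha_2|^2=1$. A second point needing care is confirming that outerness is preserved under $f\mapsto\tilde f$ and that the nonextremality of $b$ is equivalent to the existence of the outer $a$ with $|a|=\Delta$; both are standard but should be invoked explicitly so that the equivalence in condition $(2)$ lines up exactly with the definition of $Y_b$ given in the Preliminaries.
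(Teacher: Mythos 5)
Your proposal is correct and follows essentially the same route as the paper: both directions go through the coincidence of $\Theta_T$ with $\Theta_{Y_b}=\begin{pmatrix}\tilde a & \tilde b\end{pmatrix}$ via a constant $2\times 2$ unitary, extracting $\alpha_1,\alpha_2$ from one column for $(1)\Rightarrow(2)$ and completing $(\alpha_1,\alpha_2)$ to a unitary to define $\tilde b$ for $(2)\Rightarrow(1)$, with nonextremality of $b$ read off from the outerness of $a$ (equivalently $\int\log(1-|\tilde b|^2)=\int\log|\tilde a|^2>-\infty$). Your version is slightly more careful than the paper's about absorbing the scalar unitary on the range side and about passing between $b$ and $\tilde b$, but these are presentational differences, not a different argument.
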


\begin{proof}
If (1) is true, then  $\Theta_T$ coincides with $\Theta_{Y_b}$. Using Lemma~\ref{le:Y_b isometries}(1) it follows that there exists a constant unitary $2\times 2$ matrix $\left(\begin{smallmatrix}\alpha_1&\alpha_3 \\[3pt]\alpha_2  &\alpha_4\end{smallmatrix}\right)$ such that
\begin{equation}\label{eq:condition phi1 phi2}
\begin{pmatrix}
\tilde a &\tilde b
\end{pmatrix} = \begin{pmatrix}
\phi_1& \phi_2
\end{pmatrix} \begin{pmatrix}\alpha_1&\alpha_3 \\\alpha_2  &\alpha_4\end{pmatrix}.
\end{equation}
Since $\tilde a$ is an outer function, (2) is proved.

Conversely, if (2) is true, then we may choose $\alpha_3, \alpha_4$ such that $\left(\begin{smallmatrix}\alpha_1&\alpha_3 \\[3pt]\alpha_2  &\alpha_4\end{smallmatrix}\right)$ is unitary, and~\eqref{eq:condition phi1 phi2} is satisfied with $\tilde b=\alpha_3 \phi_1+\alpha_4\phi_2$. Then $\tilde b$ is a function in the unit ball of $H^\infty$ that is nonextreme since $\int \log (1-|\tilde b|^2)=\int \log |\tilde a|^2>-\infty$. Since~\eqref{eq:condition phi1 phi2} and Lemma~\ref{le:Y_b isometries}(1)
 say  that $\Theta_T$ coincides with $\Theta_{Y_b}$, it follows that  $T$ is unitarily equivalent to $Y_{\tilde b}$.
\end{proof}

However, characterizing the pairs $(\phi_1, \phi_2)$ that satisfy~(2) seems an even more difficult problem.  Moreover, such a characterization would not use directly properties of the operator $T$, but rather of its characteristic function. That is why we seek other alternatives.

\section{Some necessary conditions}\label{sse:necessary}

As noted above, the conditions $\dim\DD_T=2$, $\dim \DD_{T^*}=\dim\ker T=1$, and $T^n\to 0$ strongly are necessary for the unitary equivalence of $T$ with some $Y_b$. They are not sufficient; a less obvious condition   is given by the next lemma.

\begin{lemma}\label{le:condition (2)}
If $T$ is unitarily equivalent to $Y_b$ for some nonextreme $b$, then:
\smallskip

{\bf (C3)} There is no subspace $Y$ of $H$ invariant with respect to $T^*$, such that $T^*|Y$ is unitarily equivalent to a scalar model operator.
\end{lemma}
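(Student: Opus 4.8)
The plan is to translate (C3) into a statement about common inner divisors of the two entries of a column characteristic function, and then to read off the conclusion from the fact that $a$ is outer. Since (C3) is plainly invariant under unitary equivalence (a unitary carrying $T$ to $Y_b$ carries $T^*$-invariant subspaces to $Y_b^*$-invariant ones, with unitarily equivalent restrictions), I would first reduce to the case $T=Y_b$ on $\KK_b$, and show that $Y_b^*$ admits no invariant subspace on which it is unitarily equivalent to a scalar model operator $S_\theta$.

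The reason for passing to the adjoint is that Lemma~\ref{le:factorization}(1) is stated for operators whose characteristic function is an \emph{inner column}, and this is exactly the shape of the characteristic function of $Y_b^*$: starting from $\Theta_{Y_b}=\bigl(\begin{smallmatrix}\tilde a & \tilde b\end{smallmatrix}\bigr)$ (Lemma~\ref{le:Y_b isometries}(1)) and the standard adjoint relation $\Theta_{T^*}(\lambda)=\Theta_{T}(\bar\lambda)^*$, one computes that the characteristic function of $Y_b^*$ coincides with the column $\bigl(\begin{smallmatrix}a\\ b\end{smallmatrix}\bigr)$. This column is inner because $|a|^2+|b|^2=\Delta^2+|b|^2=1$ a.e., so Lemma~\ref{le:factorization}(1) applies to $Y_b^*$.

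With this in hand the argument is short. Suppose, for contradiction, that $Y\subset\KK_b$ is invariant for $Y_b^*$ and $Y_b^*|Y$ is unitarily equivalent to a scalar model operator $S_\theta$; then $S_\theta$ has a scalar inner characteristic function $\theta$, which is nonconstant since $K_\theta\neq\{0\}$. Applying Lemma~\ref{le:factorization}(1) to the ambient operator $Y_b^*$ and the invariant subspace $Y$, the function $\theta$ must be a common inner divisor of $a$ and $b$. But $a$ is outer, so its only inner divisors are unimodular constants; hence $\theta$ is constant, a contradiction. This proves (C3). (Alternatively, one could bypass the reduction to $Y_b$ and argue straight from Theorem~\ref{pr:reformulation}: a common inner divisor $\theta$ of $\tilde\phi_1,\tilde\phi_2$ would divide the outer combination $a=\bar\alpha_1\tilde\phi_1+\bar\alpha_2\tilde\phi_2$, again forcing $\theta$ constant.)

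The proof is essentially immediate once the correct translation is set up, so there is no deep obstacle here; the work is entirely in the setup. The only thing demanding care is the bookkeeping around the characteristic function of the adjoint—both verifying that it coincides with $\bigl(\begin{smallmatrix}a\\ b\end{smallmatrix}\bigr)$, and noting that the common-inner-divisor conclusion of Lemma~\ref{le:factorization}(1) is unaffected by replacing a characteristic function with a coinciding one, since coincidence only multiplies the column by a constant unitary, which does not change the greatest common inner divisor of its entries.
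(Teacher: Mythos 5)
Your proof is correct and follows essentially the same route as the paper: pass to the adjoint, whose characteristic function is an inner column, and apply Lemma~\ref{le:factorization}(1) to conclude that a scalar model restriction would force a nonconstant common inner divisor of the two entries, which is impossible because one linear combination (the paper's $\tilde a$, your $a$ after reducing to $Y_b$) is outer. The parenthetical ``alternative'' you mention is in fact exactly the paper's argument via Theorem~\ref{pr:reformulation}, so the two presentations differ only in bookkeeping.
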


\begin{proof}
Suppose $T$ has characteristic function given by~\eqref{eq:Theta_T}; then the characteristic function of $T^*$ is the inner function $\tilde\Theta_T=\left( \begin{smallmatrix}
\tilde\phi_1\\ \tilde \phi_2
\end{smallmatrix}\right)$. 
If $T$ is unitarily equivalent to $Y_b$ for some nonextreme $b$, it follows from Theorem~\ref{pr:reformulation} that  $\phi_1$ and $\phi_2$ must not have an inner common factor; the same is true also for $\tilde\phi_1, \tilde\phi_2$. The statement is then a consequence of Lemma~\ref{le:factorization}(1).
\end{proof}

It is easy now to give an example of an operator that satisfies (C1) and (C2) but not (C3): take $T=S^*\oplus S_\theta$ for some inner function $\theta$.

However, even all three conditions (C1--C3)
 are still not sufficient. To show this, it is enough, in view of Theorem~\ref{pr:reformulation} and Lemma~\ref{le:factorization}(1), to find   two functions $\phi_1, \phi_2\in H^\infty$ with $|\phi_1|^2+|\phi_2|^2=1$, such that $\phi_1$ and $\phi_2$ have no common inner factor and there is no linear combination of $\phi_1$ and $\phi_2$ which is outer. This is given in the next example.

\begin{example}\label{ex:(1)-(2) not sufficient}

Take $\phi_1(z)=\frac{1}{\sqrt2}z^2$, $\phi_2(z)=\frac{1}{\sqrt2}\frac{z-a}{1-\bar a z}$. Then obviously $|\phi_1|^2+|\phi_2|^2=1$. We will show that at least for $0<a<1/8$ there is no outer linear combination of $\phi_1$ and $\phi_2$, and thus, by Theorem~\ref{pr:reformulation}, $T$ is not unitarily equivalent with some $Y_b$ for $b$ nonextreme.

First, since $\phi_1, \phi_2$ themselves are not outer, it is enough to consider linear combinations of the type $\phi_1+\alpha\phi_2$ for some $\alpha\in\bbC$. If $|\alpha|<1$, then $|\alpha\phi_2(z)|<|\phi_1(z)|$ for $z\in\bbT$, and thus Rouch\'e's Theorem says that $\phi_1+\alpha\phi_2$ has the same number of zeros in $\bbD$ as $\phi_1$, so it cannot be outer. A similar argument settles the case $|\alpha|>1$.

Let us now consider $|\alpha|=1$. The equation $\phi_1(z)+\alpha\phi_2(z)=0$ can be written
\begin{equation}\label{eq:rouche}
z^2+\alpha z - a \left( \alpha +\frac{\bar a}{a} z^3\right)=0.
\end{equation}
If $|z|=1/2$, then $|z^2+\alpha z|=|z(z+\alpha)|>\frac{1}{2}\cdot \frac{1}{2}=\frac{1}{4}$. On the other hand, $|\alpha +\frac{\bar a}{a} z^3|\le 1+\frac{1}{8}<2$, and thus $|a \left( \alpha +\frac{\bar a}{a} z^3\right)|<\frac{1}{4}$ if $0<a<1/8$. We may again apply Rouch\'e's Theorem to conclude that~\eqref{eq:rouche} has a solution in the disc $\{|z|<1/4\}$, and thus neither is $\phi_1+\alpha\phi_2$ outer if $|\alpha|=1$.
\end{example}

We have then to find some other necessary condition, besides (C1)--(C3). This requires a certain construction that will be done in the next section.

\section{Construction of certain dilations}\label{se:the construction}

We start with an elementary lemma, whose proof we omit.

\begin{lemma}\label{le:2 x 2 lemma}
Suppose $0< \alpha<1$, $\xi=(\xi_1,\xi_2)$ with $|\xi_1|^2 + |\xi_2|^2=1$, and denote
\[
A:= \begin{pmatrix}
\alpha & 0\\
a\bar\xi_1 & a\bar \xi_2
\end{pmatrix}
\]
Then  
\begin{equation}\label{eq:formula for a}
a=a_\xi:= \left(
\frac{1-\alpha^2}{1-\alpha^2|\xi_2|^2}
\right)^{1/2},
\end{equation} 
is the only value of $a$ for which $A$
is a contraction with $\dim\DD_A=\dim\DD_{A^*}=1$. If we denote then by $e_\xi$ a unit vector in  $\ker (I-A^*A)$ (therefore $\|A e_\xi\|=\|e_\xi\|$), then $e_\xi$ is determined up to a unimodular constant; moreover, for 
 any $\eta\in\bbC^2$ with $\|\eta\|=1$, there exists $\xi=(\xi_1, \xi_2)$, $|\xi_1|^2 + |\xi_2|^2=1$ such that $e_\xi=\eta$.
\end{lemma}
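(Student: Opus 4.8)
The plan is to verify each claim in Lemma~\ref{le:2 x 2 lemma} by direct computation, since the matrix $A$ is only $2\times 2$. First I would determine for which $a$ the matrix $A$ is a contraction with one-dimensional defect spaces. Since $\dim\DD_A=\dim\DD_{A^*}=1$ means $A$ is a contraction that is neither unitary nor has a smaller defect, and since $A$ is $2\times 2$, this amounts to requiring that $I-A^*A$ be positive semidefinite of rank exactly~$1$, i.e.\ that $\det(I-A^*A)=0$ while $I-A^*A\neq0$. I would compute
\[
A^*A=\begin{pmatrix}
\alpha^2+a^2|\xi_1|^2 & a^2\bar\xi_1\xi_2\\[3pt]
a^2\xi_1\bar\xi_2 & a^2|\xi_2|^2
\end{pmatrix},
\]
form $I-A^*A$, and set its determinant to zero. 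Expanding the determinant and using $|\xi_1|^2+|\xi_2|^2=1$ should collapse the cross terms, yielding a single linear equation in $a^2$ whose solution is exactly the formula~\eqref{eq:formula for a}. I would then check that the resulting $I-A^*A$ is nonzero (so the rank is genuinely~$1$, not~$0$) and positive semidefinite, confirming $A$ is a contraction of the desired type; the bound $0<\alpha<1$ guarantees $0<a<1$ and that $A$ is not itself unitary.

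Next, since $\ker(I-A^*A)$ is one-dimensional by construction, the unit vector $e_\xi$ spanning it is automatically determined up to a unimodular constant, and $\|Ae_\xi\|=\|e_\xi\|$ is immediate from $e_\xi\in\ker(I-A^*A)$. The one remaining—and least automatic—claim is the surjectivity statement: given an arbitrary unit vector $\eta\in\bbC^2$, one can choose $\xi$ with $\|\xi\|=1$ so that $e_\xi=\eta$ up to phase. For this I would reverse the computation: I would solve explicitly for the kernel vector $e_\xi$ of $I-A^*A$ in terms of $(\xi_1,\xi_2)$ and $\alpha$, obtaining $e_\xi$ as an explicit (up to scaling) function of $\xi$, and then show this map $\xi\mapsto[e_\xi]$ from the unit sphere of $\bbC^2$ (mod phase) onto projective space $\bbP^1$ is onto. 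I expect $e_\xi$ to have components proportional to something like $(a^2\bar\xi_1\xi_2,\,1-\alpha^2-a^2|\xi_1|^2)$ or similar; substituting the value of $a$ from~\eqref{eq:formula for a} should let me invert the relationship and read off $\xi$ from a prescribed $\eta$.

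The main obstacle I anticipate is precisely this last surjectivity claim. Unlike the existence and uniqueness of $a$, which are forced by a single determinant equation, showing that \emph{every} direction $\eta$ is realized as some $e_\xi$ requires genuinely solving for $\xi$ given $\eta$, and the dependence of $a=a_\xi$ on $\xi$ through~\eqref{eq:formula for a} couples the equations in a nonlinear way. I would handle this either by a direct algebraic inversion—solving the two (projective) component equations for the ratio $\xi_1/\xi_2$ and then normalizing—or, if that proves messy, by a degree or continuity argument: as $\xi$ ranges over the sphere, $[e_\xi]$ traces a curve in $\bbP^1$, and I would argue this curve is all of $\bbP^1$ by showing the map is continuous and that no direction is omitted (e.g.\ by checking the two ``extreme'' cases $\xi=(1,0)$ and $\xi=(0,1)$ and using connectedness). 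Since the paper states ``the proof we omit,'' I expect the intended argument to be a short explicit computation, so I would favor the direct inversion and only fall back on the topological argument if the algebra becomes unwieldy.
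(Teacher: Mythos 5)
Your plan is the right one, and since the paper introduces this as ``an elementary lemma, whose proof we omit,'' there is no official argument to compare against; the direct $2\times2$ computation you outline is evidently what the authors had in mind, and it does close up. Two remarks to confirm the points you leave open. First, using $|\xi_1|^2+|\xi_2|^2=1$ the determinant collapses as you predict: $\det(I-A^*A)=(1-\alpha^2)-a^2\bigl(1-\alpha^2|\xi_2|^2\bigr)$, which is strictly decreasing in $a^2$, vanishes exactly at $a^2=a_\xi^2$, is positive for smaller $a$ (then $I-A^*A$ is invertible, so the defect is $2$) and negative for larger $a$ (so $A$ is not a contraction); note also that $\dim\DD_A=\dim\DD_{A^*}$ is automatic for a square matrix, so only the rank of $I-A^*A$ needs controlling. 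Second, the surjectivity you single out as the main obstacle is in fact immediate once $a_\xi$ is substituted back: one finds
\[
I-A^*A=\frac{1}{1-\alpha^2|\xi_2|^2}\,vv^*,\qquad v=\begin{pmatrix}(1-\alpha^2)\bar\xi_2\\ -\bar\xi_1\end{pmatrix},
\]
so $\ker(I-A^*A)=v^\perp$ is spanned by $\bigl(\xi_1,(1-\alpha^2)\xi_2\bigr)$. Given a unit vector $\eta=(\eta_1,\eta_2)$, taking $\xi$ to be the normalization of $\bigl(\eta_1,\eta_2/(1-\alpha^2)\bigr)$ gives $e_\xi=\eta$ up to a unimodular constant; no degree or continuity argument is needed, and the nonlinear coupling through $a_\xi$ causes no trouble. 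One small slip: $0<\alpha<1$ does not force $a_\xi<1$; when $\xi_1=0$ one gets $a_\xi=1$ (consistent with the paper's later range $0<a\le1$), though this affects nothing else in your argument.
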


Note that for  $a<a_\xi$ the defects have dimension~2, while for $a>a_\xi$ $A$ is no more a contraction. Also, we may take $e_\xi=\xi$ if and only if $\xi$ is one of the standard basis vectors.

To go  beyond the  conditions in Section~\ref{sse:necessary}, we consider a construction that stems from the fact that $\mathbf{Y}_b$ is a nonisometric  dilation of $Y_b$. We have then to discuss a certain general construction of nonisometric  dilations.
Suppose then that $T$ is a contraction acting on the Hilbert space $H$ with $\dim\DD_T=2$, $\dim\DD_{T^*}=\dim\ker T=1$.

We are interested in  dilations $\tilde T$ of $T$ with the property that $\dim\DD_{\tilde T}= \dim\DD_{\tilde T^*}=1$. These may be described in the following manner. For clarity of notation, we will denote by $T_d$ and $T_u$ the restrictions $T_d=T:\DD_T \to \DD_{T^*}$, $T_u=T:\DD_T^\perp \to \DD_{T^*}^\perp$; note that $T_u$ is unitary and $T_d$ is a strict contraction.

Take a vector  $\xi\in \DD_T$, with   $\|\xi\|=1$, and consider, for $0<a\le 1$, the operator
\[
A_\xi := \begin{pmatrix}
T_d\\ a_\xi\otimes \xi 
\end{pmatrix}:\DD_T\to \DD_{T^*}\oplus \bbC.
\]
The operator $T_d$ is a strict contraction with kernel of dimension~1.
If we choose in $\DD_T$ a basis formed by the eigenvectors of $T_d^*T_d$, then the matrix of $T_d$ is $(\alpha\quad 0)$ for some $0<\alpha<1$, and thus
 $A_\xi$ is precisely the $A$ in Lemma~\ref{le:2 x 2 lemma}. 
Consequently, $\dim \DD_{ A_\xi}=\dim \DD_{ A_\xi^*}=1$.
Remember that $e_\xi$ is a normalized vector in $\ker(I-A_\xi^*A_\xi)\cap \DD_T$; this notation will be used consistently in the sequel of the paper.

Consider then the space
\[
K_\xi=H\oplus\bbC\oplus\bbC\oplus \cdots
\]
on which acts the operator
\begin{equation}\label{eq:T xi}  T_\xi:=
\begin{pmatrix}
T & 0 & 0 &0&\dots\\
a_\xi\otimes \xi &0 &0&0&\dots\\
0& 1 & 0 &0&\dots\\
0 & 0& 1 & 0&\dots\\
\vdots&\vdots&\vdots&\ddots
\end{pmatrix}.
\end{equation}

\begin{lemma}\label{le:one dilations are T_xi}
\begin{enumerate}
\item

 With the above notations, $T_\xi$ is a minimal contractive dilation of $T$ satisfying $\dim \DD_{  T_\xi}=\dim \DD_{T^*_\xi}=1$.  

\item
 Suppose $\widehat T\in \LL(\widehat H)$ is a dilation of $T$, such that $\dim \DD_{  \widehat T}=1$ and $\widehat T|\widehat H\ominus H$ is a pure isometry of multiplicity~1. Then $\widehat T$ is unitarily equivalent to some~$T_\xi$ as above. 
 
\end{enumerate}
\end{lemma}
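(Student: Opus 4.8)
My goal is to prove the two statements about the dilation $T_\xi$ defined by~\eqref{eq:T xi}.

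Let me understand the structure. We have $T$ with $\dim\DD_T=2$, $\dim\DD_{T^*}=\dim\ker T=1$. The construction builds $A_\xi$ by stacking a functional $a_\xi\otimes\xi$ below $T_d$, producing a contraction with defects of dimension 1. Then $T_\xi$ extends $T$ on $K_\xi = H\oplus\bbC\oplus\bbC\oplus\cdots$ by a shift structure.

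For **(1)**: First I check $T_\xi$ is a dilation. The key point is that $P_H T_\xi^n|H = T^n$. Since the first column of $T_\xi$ (restricted to $H$) has $T$ in the top-left and $a_\xi\otimes\xi$ below, and subsequent columns shift the $\bbC$ coordinates upward, applying $T_\xi$ to a vector in $H$ and projecting back to $H$ repeatedly only sees $T$. The shift structure feeds forward but never back into $H$ (the $(1,2),(1,3),\dots$ entries in the first row are zero). So $P_H T_\xi^n|H=T^n$ follows by a direct induction.

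Next, minimality: $\bigvee_{n\ge 0}T_\xi^n H = K_\xi$. Applying $T_\xi$ to $h\in H$ produces $Th \oplus (a_\xi\otimes\xi)(h)\oplus 0\oplus\cdots$; the second coordinate is nonzero for suitable $h$ (since $a_\xi\neq 0$ and $\xi\neq 0$). Iterating, the shift pushes mass into each successive $\bbC$. I would verify that $\{0\}\oplus\bbC\oplus\{0\}\oplus\cdots$ lies in the span (using that the functional $a_\xi\otimes\xi$ is not identically zero on $H$), and then that each later $\bbC$ is reached by the shift, so the whole $\bigoplus\bbC$ is in the closed span together with $H$.

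Finally, the defect dimensions. Here I would use the block structure. Write $T_\xi = \begin{pmatrix}A_\xi & B\\ 0 & V\end{pmatrix}$ where $A_\xi:H\to H\oplus\bbC$ is the contraction from Lemma~\ref{le:2 x 2 lemma} (in disguise: the map $h\mapsto Th\oplus(a_\xi\otimes\xi)(h)$), and $V$ is a shift on $\bigoplus\bbC$. The claim $\dim\DD_{T_\xi}=\dim\DD_{T_\xi^*}=1$ should follow by computing $I-T_\xi^*T_\xi$ and $I-T_\xi T_\xi^*$ block-wise and reducing to the defects of $A_\xi$, which are each 1-dimensional by Lemma~\ref{le:2 x 2 lemma}. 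The shift part contributes no extra defect to $T_\xi^*T_\xi$ (pure isometries are isometric), and on the adjoint side the one missing dimension from the shift's cokernel is absorbed into the coupling. \textbf{I expect this defect computation to be the main obstacle}, since the coupling through $B$ must be tracked carefully to confirm both defects stay exactly 1.

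For **(2)**: Given an abstract dilation $\widehat T$ with $\dim\DD_{\widehat T}=1$ and $\widehat T|(\widehat H\ominus H)$ a pure shift of multiplicity 1, I want to recover the $T_\xi$ form. The strategy is to use the dilation property to fix the block structure of $\widehat T$ relative to $H\oplus(\widehat H\ominus H)$. Since $H$ is co-invariant (the dilation condition forces the $(2,1)$ block, mapping $\widehat H\ominus H$ into $H$, to vanish), $\widehat T$ is upper triangular:
\[
\widehat T=\begin{pmatrix}T & * \\ 0 & V\end{pmatrix},
\]
with $V$ the given pure shift of multiplicity 1. The coupling operator $C:\widehat H\ominus H\to H$ (the $(1,2)$ block) and the way $\widehat T$ maps $H$ into $\widehat H\ominus H$ (call it $R$, with $R:H\to\widehat H\ominus H$) are constrained by the requirement $\dim\DD_{\widehat T}=1$. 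I would argue that $R$ must factor as $a_\xi\otimes\xi$ landing in the first summand of the shift, using the defect condition to pin down both its rank-one form and the value $a_\xi$ via~\eqref{eq:formula for a}. The unit vector $\xi\in\DD_T$ emerges as the direction of $R^*$, and Lemma~\ref{le:2 x 2 lemma} guarantees that for the defect to collapse to dimension 1 the constant must be exactly $a_\xi$. Once $R=a_\xi\otimes\xi$ into the first $\bbC$ is established and $C$ is shown to be forced (or unitarily normalizable to the standard shift coupling by minimality and the multiplicity-1 condition), the unitary identifying $\widehat H\ominus H$ with $\bigoplus\bbC$ transports $\widehat T$ to $T_\xi$. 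The delicate part here is showing the coupling $C$ cannot introduce additional structure — I would use minimality of the dilation (which part (1) shows $T_\xi$ possesses) together with the rigidity of multiplicity-1 shifts to rule out extra couplings, reducing to a unitary change of basis on the shift summand.
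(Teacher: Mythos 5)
Your overall strategy --- reduce both defect computations to Lemma~\ref{le:2 x 2 lemma} --- is the paper's, but in part (1) the step you yourself flag as ``the main obstacle'' is precisely the step that has to be supplied, and your block bookkeeping obscures it: in \eqref{eq:T xi} the first row is $(T,0,0,\dots)$, so there is no coupling $B$ from the shift summand back into $H$ to track; the only off-diagonal entry is $a_\xi\otimes\xi$ in the $(2,1)$ position. The paper disposes of the whole computation with one device: use \emph{different} orthogonal decompositions for domain and range, namely $K_\xi=\DD_T^\perp\oplus\DD_T\oplus\bbC\oplus\cdots$ on the domain side and $K_\xi=\DD_{T^*}^\perp\oplus(\DD_{T^*}\oplus\bbC)\oplus\bbC\oplus\cdots$ on the range side. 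With respect to these, $T_\xi$ is literally the direct sum $T_u\oplus A_\xi\oplus 1\oplus 1\oplus\cdots$, and since every summand except $A_\xi$ is unitary, contractivity and $\dim\DD_{T_\xi}=\dim\DD_{T_\xi^*}=1$ fall out of Lemma~\ref{le:2 x 2 lemma} with nothing further to check. Without some such regrouping you would still owe the adjoint-defect claim, which your sketch only asserts (``the one missing dimension \dots is absorbed into the coupling'').

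In part (2) the triangularity is backwards, and this matters. The hypothesis that $\widehat T|\widehat H\ominus H$ is an isometry already presupposes that $\widehat H\ominus H$ is invariant, so the block mapping $\widehat H\ominus H$ into $H$ (your $C$) vanishes from the outset; the ``delicate part'' you isolate does not exist. What does require an argument, and is missing from your sketch, is why the coupling $R:H\to\widehat H\ominus H$ lands in the one-dimensional wandering subspace of the shift. Writing $\widehat T=\left(\begin{smallmatrix}T&0\\ R&V\end{smallmatrix}\right)$ and using that $V$ is an isometry, one gets
$I-\widehat T^*\widehat T=\left(\begin{smallmatrix}I-T^*T-R^*R& -R^*V\\ -V^*R&0\end{smallmatrix}\right)$;
positivity of this operator together with the vanishing $(2,2)$ block forces $V^*R=0$, i.e.\ $R$ is a rank-one map $a\otimes\xi$ into the first copy of $\bbC$. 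Only then does $\rank(I-T^*T-R^*R)=1$ combined with Lemma~\ref{le:2 x 2 lemma} pin down $a=a_\xi$ and $\xi\in\DD_T$, which is the paper's conclusion. So the skeleton is right, but the two load-bearing computations are the ones left out.
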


\begin{proof}
(1) Let us denote $\II_\xi=K_\xi\ominus H$. With respect to the two decompositions
\[
K_\xi=\DD_T^\perp \oplus \DD_T\oplus \bbC\oplus\bbC\oplus \cdots =
\DD_{T^*}^\perp \oplus \DD_{T^*}\oplus \bbC\oplus\bbC\oplus \cdots
\]
$T_\xi$ has the matrix
\begin{equation}\label{eq:T xi decomposed further}  T_\xi:=
\begin{pmatrix}
T_u & 0 & 0 &0&\dots\\
0&T_d & 0 & 0 &\dots\\
0& a_\xi\otimes \xi &0 &0&\dots\\
0& 0& 1 & 0 &\dots\\
0& 0 & 0& 1 & \dots\\
\vdots&\vdots&\vdots&\ddots
\end{pmatrix},
\end{equation}
which means that, if in the range space we consider together the second and the third space ($\DD_{T^*}\oplus\bbC$), the matrix of $T_\xi$ is diagonalized, and we have
\[
T_\xi=T_u\oplus A_\xi\oplus 1\oplus 1\oplus\cdots.
\] 
Since all operators except the second are unitary, $T_\xi$ is a contraction and the dimensions of its defects are the same as those of $A_\xi$, that is~1.
Moreover, $T_\xi$ is a minimal dilation of $T$.

(2) Suppose $\widehat T$ is a dilation of $T$ with $\dim\DD_{\widehat T}=1$, acting on $\widehat H\supset H$. Since $\widehat T|\widehat H\ominus H$ is a shift of multiplicity~1, $\widehat T$ must have the form
\[
 \widehat T:=
\begin{pmatrix}
T & 0 & 0 &\dots\\
X &0 &0&\dots\\
0& 1 & 0 &\dots\\
0 & 0& 1 & \dots\\
\vdots&\vdots&\vdots&\ddots
\end{pmatrix},
\]
with a nonnull $X:H\to \bbC$, $X=a\otimes\xi$ for some $\xi\in H$ with $\|\xi\|=1$ and some $a$.  We have $\DD_{\widehat T}\subset H$, and $D_{\widehat T}|H=I-T^*T-X^*X$. Since $I-T^*T$ has rank 2, while $I-T^*T-X^*X$ has rank~1, it follows from Lemma~\ref{le:2 x 2 lemma} that  $X=a_\xi\otimes\xi$, with $\xi\in\ker D_{\widehat T}\cap \DD_T$.
\end{proof}

\begin{lemma}\label{le:properties of Txi}

If $\xi$ is an eigenvector of $\DD_T$, and $T_\xi$ is completely nonunitary, then:

\begin{enumerate}
\item
The characteristic function $b_\xi$ of $T_\xi$ is nonextreme. 

\item
If $\II_\xi\subset \YY\subset K_\xi$, $\YY$ is invariant with respect to $T_\xi$, and $T_\xi|\YY$ is an isometry, then $T_\xi|\YY$ is a shift of multiplicity~1.

\end{enumerate}
\end{lemma}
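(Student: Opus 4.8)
The plan is to deduce both statements from the observation that, since $T_\xi$ is completely nonunitary with $\dim\DD_{T_\xi}=\dim\DD_{T_\xi^*}=1$, its characteristic function $b_\xi$ is a scalar function in the unit ball of $H^\infty$. Granting this, part~(2) is immediate: $\YY$ is invariant for $T_\xi$, $T_\xi|\YY$ is an isometry, and $\YY\supset\II_\xi\neq\{0\}$ is nontrivial, so Lemma~\ref{le:factorization}(2) applies verbatim and yields that $T_\xi|\YY$ is a shift of multiplicity~$1$.

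For part~(1) I would use the invariant subspace $\II_\xi=K_\xi\ominus H$. From the matrix~\eqref{eq:T xi} one sees at once that $\II_\xi$ is invariant, that $T_\xi|\II_\xi$ is the unilateral shift of multiplicity~$1$, and that the compression of $T_\xi$ to $H$ is $T$ itself. Feeding this invariant subspace into the regular factorization~\eqref{eq:factorization} gives $b_\xi=\Theta_2\Theta_1$, where the pure part of $\Theta_1$ is the characteristic function of the shift $T_\xi|\II_\xi$ and the pure part of $\Theta_2$ is $\Theta_T=(\phi_1\ \phi_2)$. A unilateral shift is a pure isometry, so its characteristic function has zero-dimensional domain; consequently $\Theta_1$ collapses to a constant isometry $\bbC\to\bbC^2$, i.e.\ a constant unit vector. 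This means that $b_\xi$ is a constant linear combination
\[
b_\xi=u_1\phi_1+u_2\phi_2,\qquad |u_1|^2+|u_2|^2=1 .
\]

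Completing $(\bar u_1,\bar u_2)$ to an orthonormal basis of $\bbC^2$ and applying Parseval to $(\phi_1,\phi_2)$ then gives, pointwise on $\bbT$,
\[
|\phi_1|^2+|\phi_2|^2=|b_\xi|^2+|c|^2,\qquad c:=\bar u_1\phi_2-\bar u_2\phi_1\in H^\infty,
\]
so that $1-|b_\xi|^2=(1-|\phi_1|^2-|\phi_2|^2)+|c|^2\ge|c|^2$. As soon as $c\not\equiv0$ the proof ends, because a nonzero $H^\infty$ function satisfies $\log|c|\in L^1(\bbT)$, and then $\log(1-|b_\xi|^2)$ lies between $2\log|c|\in L^1$ and $0$; hence $\log(1-|b_\xi|^2)\in L^1$, which is exactly the nonextremality of $b_\xi$.

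The one real obstacle is therefore to exclude $c\equiv0$, and here I would invoke that the contractions of interest satisfy (C2). Then $|\phi_1|^2+|\phi_2|^2=1$ a.e., so $1-|b_\xi|^2=|c|^2$; if $c\equiv0$ we would have $|b_\xi|=1$ a.e., i.e.\ $b_\xi$ inner. But a completely nonunitary contraction with scalar inner characteristic function is unitarily equivalent to the compressed shift on $K_{b_\xi}=H^2\ominus b_\xi H^2$ (the scalar model operator of the Preliminaries), for which $T_\xi^n\to0$ strongly; this is incompatible with the invariant shift $T_\xi|\II_\xi$, since $\|T_\xi^n y\|=\|y\|$ for $0\neq y\in\II_\xi$. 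Hence $c\not\equiv0$ and $b_\xi$ is nonextreme. (The condition $c\equiv0$ is just the proportionality of $\phi_1$ and $\phi_2$; under~(C2) this forces $\phi_1$ to be a scalar multiple of an inner function, so that $T^*$ has a restriction unitarily equivalent to a scalar model operator, in direct violation of~(C3)---an alternative way to close the gap.)
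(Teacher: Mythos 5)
Your part (2) is exactly the paper's proof: both consist of applying Lemma~\ref{le:factorization}(2) to $T_\xi$ and the invariant subspace $\YY$. For part (1) you take a genuinely different route. The paper disposes of it in one line: $T_\xi$ has the isometric restriction $T_\xi|\II_\xi$, and a c.n.u.\ contraction with a scalar \emph{extreme} characteristic function admits no isometric restriction (this sits behind the classification of regular factorizations of scalar characteristic functions in \cite[Proposition VII.3.5]{SF}, the same source as Lemma~\ref{le:factorization}(2)). You instead run the factorization~\eqref{eq:factorization} explicitly for the invariant subspace $\II_\xi$ of $T_\xi$, correctly obtaining that $b_\xi$ coincides with $u_1\phi_1+u_2\phi_2$ for a constant unit vector $(u_1,u_2)$, and the pointwise bound $1-|b_\xi|^2\ge |c|^2$ with $c=\bar u_1\phi_2-\bar u_2\phi_1\in H^\infty$. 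This is sound and even quantitative, and for $c\not\equiv 0$ it yields nonextremality at once.

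The gap is the degenerate case $c\equiv 0$, i.e.\ $\phi_1,\phi_2$ pointwise proportional, so that $\Theta_T$ coincides with $(b_\xi\ \ 0)$. You close it only by importing (C2) (or (C2) together with (C3)), but these are not hypotheses of the lemma: the standing assumptions of Section~5 amount only to (C1), and part (1) is invoked in Theorem~\ref{pr:reformulation 2}, whose statement likewise does not assume (C2). Without (C2) your identity degenerates to $1-|b_\xi|^2=1-|\phi_1|^2-|\phi_2|^2$, which carries no information; a characteristic function coinciding with $(\psi\ \ 0)$ for $\psi$ extreme but non-inner (realized, e.g., by $T_0\oplus S^*$ with $T_0$ the model of $\psi$) is not excluded by your computation, and your $C_{00}$ argument rules out only the inner case, not the extreme non-inner one. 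So as written you prove the lemma under an extra hypothesis. To get the statement as the paper needs it, the cleanest repair is precisely the fact the paper uses: an extreme scalar characteristic function is incompatible with the existence of the nonzero isometric restriction $T_\xi|\II_\xi$.
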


\begin{proof}
By Lemma~\ref{le:one dilations are T_xi}, we have $\dim \DD_{  T_\xi}=\dim \DD_{  T^*_\xi}=1$, so $T_\xi$ has a scalar characteristic function $b_\xi$. This has to be nonextreme since $T_\xi$ has an isometric restriction (namely, $T|\II_\xi$).

For the second statement, apply Lemma~\ref{le:factorization}(2) to the contraction $T_\xi$ and its invariant subspace $\YY$.
\end{proof}

At this point we may give  another reformulation of the main question.

\begin{theorem}\label{pr:reformulation 2}
If $T$ is a c.n.u. with characteristic function given by~\eqref{eq:Theta_T}, then the following are equivalent:
\begin{enumerate}
\item
$T$ is unitarily equivalent with $Y_b$ for some nonextreme $b$.
\item
There exists $\xi\in\DD_T$, $\|\xi\|=1$, such that the contraction $T_\xi$ defined by~\eqref{eq:T xi} is completely nonunitary and $T_\xi|\II_\xi$ is a maximal isometry.
\end{enumerate}
\end{theorem}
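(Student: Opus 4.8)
The plan is to prove the two implications separately, exploiting the structural facts already established about $Y_b$ and about the dilations $T_\xi$. For the direction (1)$\Rightarrow$(2), I would start by assuming $T$ is unitarily equivalent to $Y_b$ with $b$ nonextreme, and then \emph{transport} the known dilation $\mathbf{Y}_b$ of $Y_b$ through this equivalence. Recall from Lemma~\ref{le:Y_b isometries} that $\mathbf{Y}_b$ is a nonisometric dilation of $Y_b$ with $\dim\DD_{\mathbf{Y}_b}=\dim\DD_{\mathbf{Y}_b^*}=1$ (since $\mathbf{Y}_b^*$ is the Sz.-Nagy--Foias model for the scalar function $b$), and that $\mathbf{Y}_b|\JJ_b$ is a maximal isometry. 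Since $\mathbf{Y}_b|\JJ_b$ is unitarily equivalent to a restriction of $Z^*$ and $\JJ_b=\{0\}\oplus(L^2\ominus\overline{\Delta H^2})$, I would verify that this maximal isometry is in fact a pure shift of multiplicity~1; then Lemma~\ref{le:one dilations are T_xi}(2) applies and forces $\mathbf{Y}_b$ to be of the form $T_\xi$ for an appropriate $\xi\in\DD_{Y_b}$. Pulling this back along the unitary equivalence $T\simeq Y_b$ yields the desired $\xi\in\DD_T$ so that $T_\xi\simeq\mathbf{Y}_b$; complete nonunitarity of $T_\xi$ follows because $\mathbf{Y}_b$ is, and the maximality of $T_\xi|\II_\xi$ as an isometry transfers directly from the maximality of $\mathbf{Y}_b|\JJ_b$.

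For the reverse direction (2)$\Rightarrow$(1), I would begin with the given $\xi$ and apply Lemma~\ref{le:properties of Txi}(1): since $T_\xi$ is c.n.u.\ with $\dim\DD_{T_\xi}=\dim\DD_{T_\xi^*}=1$, it has a scalar characteristic function $b_\xi$ which is nonextreme, so $T_\xi$ is a legitimate candidate to be compared with some $\mathbf{Y}_{b_\xi}$. The aim is to identify $T_\xi$ with $\mathbf{Y}_{b}$ (for $b=b_\xi$) and then recover $T\simeq Y_b$ by compressing to the appropriate coinvariant subspace. The key leverage comes from the hypothesis that $T_\xi|\II_\xi$ is a \emph{maximal} isometry: by Lemma~\ref{le:Y_b isometries}(4), $\mathbf{Y}_b|\JJ_b$ is the maximal isometry inside $\mathbf{Y}_b$, so matching the characteristic functions $b_\xi$ and using the uniqueness of the Sz.-Nagy--Foias model, I would argue that $T_\xi$ and $\mathbf{Y}_{b_\xi}$ are unitarily equivalent via an equivalence that carries the maximal-isometry pieces $\II_\xi$ and $\JJ_b$ onto each other. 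Since $T=T_\xi|H$ with $H=K_\xi\ominus\II_\xi$ and $Y_b=\mathbf{Y}_b|\KK_b$ with $\KK_b=\tilde\KK_b\ominus\JJ_b$, compressing the equivalence to the orthocomplements of the maximal isometries delivers $T\simeq Y_b$.

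The main obstacle I anticipate is the matching step in the reverse direction: showing that the abstract unitary equivalence between $T_\xi$ and its own Sz.-Nagy--Foias model can be chosen to respect the decompositions $K_\xi=\II_\xi\oplus H$ and $\tilde\KK_b=\JJ_b\oplus\KK_b$. The delicate point is that the maximal isometric part of a c.n.u.\ contraction need not be canonically placed, so I must use the maximality hypothesis to pin down that $\II_\xi$ is carried to precisely $\JJ_b$ (rather than to some other shift-invariant subspace), and that the \emph{minimality} of $T_\xi$ as a dilation of $T$ (Lemma~\ref{le:one dilations are T_xi}(1)) matches the minimality of $\mathbf{Y}_b$ as a dilation of $Y_b$. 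Concretely, I expect to invoke the uniqueness-up-to-unitary-equivalence of the minimal nonisometric dilation with prescribed defect and prescribed maximal isometric part, which should follow from the classification encoded in Lemma~\ref{le:one dilations are T_xi}(2) together with the rigidity of the Sz.-Nagy--Foias model. Once the maximal isometries are aligned, the compression argument is routine, so the entire weight of the proof rests on this alignment.
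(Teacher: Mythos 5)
Your proposal is correct and takes essentially the same route as the paper: both implications rest on Lemma~\ref{le:one dilations are T_xi}(2), Lemma~\ref{le:properties of Txi}(1), and the uniqueness of the maximal isometric part from Lemma~\ref{le:Y_b isometries}(4), with the identification $W\II_\xi=\JJ_{b_\xi}$ forced, exactly as you anticipate, by the maximality hypothesis. The ``delicate alignment step'' you flag is disposed of in the paper in one line by that same uniqueness (any intertwining unitary must carry the maximal isometric part onto the maximal isometric part), so no additional rigidity or minimality argument is required.
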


\begin{proof}
If $T$ is unitarily equivalent with $Y_b$ for some nonextreme $b$, then, by~Lemma~\ref{le:Y_b isometries}, $\mathbf{Y}_b$ is a completely nonunitary dilation of $Y_b$ with the required properties in the assumptions of~\ref{le:one dilations are T_xi}(2), whence it has to be unitarily equivalent to some $T_\xi$. By~Lemma~\ref{le:Y_b isometries} we know that $T_\xi|\II_\xi$ is a maximal isometry.

Conversely, if (2) is true, the given completely nonunitary contraction $T_\xi$ has a nonextreme characteristic function $b_\xi$ by Lemma~\ref{le:properties of Txi}(i). There exists therefore a unitary $W:K_\xi\to \widehat\KK_{b_\xi}$, such that $\mathbf{Y}_{b_\xi}W=WT_\xi$. By Lemma~\ref{le:Y_b isometries}(4), $\JJ_b$ is the space on which acts the unique maximal isometry contained in $\mathbf{Y}_{b_\xi}$, and therefore it has to be equal to $W\II_\xi$. Passing to orthogonals, $W$ maps $H$ onto $\KK_{b_\xi}$, and commutes with the respective compressions there. This says precisely that $T$ is unitarily equivalent to $Y_{b_\xi}$.   
\end{proof}

We have then to investigate the two properties in point (2) of the above proposition.

\section{$T_\xi$ completely nonunitary}

We prove in this section that conditions (C1)--(C3) imply that $T_\xi$ is  completely nonunitary. 

\begin{proposition}\label{pr:T xi completely nonunitary}
Suppose $T$ is a c.n.u. contraction on $H$ that satisfies conditions (C1)--(C3).
Then $T_\xi$ is  completely nonunitary for all $\xi\in\DD_T$, $\|\xi\|=1$.

\end{proposition}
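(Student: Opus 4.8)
The plan is to show that any unitary summand of $T_\xi$ would produce, upon passing to adjoints, a reducing subspace on which $T_\xi^*$ is unitary, and then trace this backward to a forbidden structure inside $T$ itself, contradicting one of (C1)--(C3). The strategy rests on the explicit diagonalization
\[
T_\xi=T_u\oplus A_\xi\oplus 1\oplus 1\oplus\cdots
\]
established in Lemma~\ref{le:one dilations are T_xi}(1). Since $A_\xi$ has defects of dimension~$1$ (so it is c.n.u.\ provided it has no unitary part), and the trailing $1$'s are a shift after combining with the $A_\xi$-block, the only place a unitary part can hide is in the interaction between $T$ and the attached coordinates. Thus the first step is to suppose, for contradiction, that $T_\xi$ has a nonzero reducing subspace $\RR\subset K_\xi$ on which $T_\xi$ is unitary, and to analyze where $\RR$ can sit relative to $H$ and $\II_\xi=K_\xi\ominus H$.

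First I would use the fact that $T_\xi|\II_\xi$ is a pure isometry (indeed a shift, being a restriction of the unilateral-shift tail) to argue that a reducing unitary subspace $\RR$ must be orthogonal to the ``purely shifting'' directions; more precisely, since $T_\xi^*$ sends each attached $\bbC$ down to the previous one and eventually into $H$, a vector generating a unitary summand under $T_\xi$ must, under iterated $T_\xi^*$, stay bounded below in norm, which forces its components in the tail to vanish and concentrates $\RR$ in a controlled way. The cleaner route, though, is to pass to the adjoint: $T_\xi$ is unitary on $\RR$ iff $T_\xi^*$ is unitary on $\RR$, and $\RR$ is then invariant for $T_\xi^*$. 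I would then examine $T_\xi^*$, whose structure is the ``upward'' shift on the tail together with the coupling $a_\xi\otimes\xi$ feeding into $H$. Because $T^n\to 0$ strongly (condition (C2)) and the tail is a forward shift, I expect that any vector on which $T_\xi^*$ acts isometrically and which lies in $\RR$ must have its $H$-component annihilated under forward iteration, i.e.\ it generates an invariant subspace for $T^*$ on which $T^*$ behaves unitarily or model-like.

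The key reduction is therefore: a unitary summand of $T_\xi$ yields a nontrivial subspace $Y\subset H$, invariant for $T^*$, on which $T^*$ is unitarily equivalent to a piece that is either itself unitary (excluded because $T$ is c.n.u.) or a scalar model operator $S_\theta$ (excluded by (C3)). To make this precise I would compute, for a candidate generating vector $h\oplus g\in\RR$ with $h\in H$, $g$ in the tail, the norm identity $\|T_\xi^n(h\oplus g)\|^2=\|h\oplus g\|^2$ for all $n$, split it using the diagonalization, and use $\|T^n h\|\to 0$ to extract information; the surviving isometric behavior on the $H$-side then matches the characteristic-function dichotomy via Lemma~\ref{le:factorization}. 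Conditions (C1) and (C3) together with c.n.u.-ness of $T$ close off every possibility, forcing $\RR=\{0\}$.

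The main obstacle I anticipate is the bookkeeping of the infinite tail: showing rigorously that a reducing unitary subspace cannot have nonvanishing components spread across infinitely many attached $\bbC$'s, and that the coupling $a_\xi\otimes\xi$ does not conspire to create a hidden unitary direction straddling $H$ and $\II_\xi$. Handling this likely requires the observation from Lemma~\ref{le:properties of Txi} that $T_\xi|\II_\xi$ is a \emph{maximal} isometry together with Lemma~\ref{le:Y_b isometries}(4)-style arguments about where isometric (hence unitary) restrictions must live, so that any unitary summand is squeezed into $H$, where (C2) and (C3) finish the job. Keeping track of which coordinate decomposition (the source $\DD_T^\perp\oplus\DD_T\oplus\cdots$ versus the range $\DD_{T^*}^\perp\oplus\DD_{T^*}\oplus\cdots$) one is working in, and ensuring the strict-contraction property of $T_d$ is used to rule out a unitary contribution from the $A_\xi$-block, is where the care is needed.
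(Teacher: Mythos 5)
Your plan has a genuine gap at its central step. You assert that a unitary summand $\RR$ of $T_\xi$ can be ``traced backward'' to a nontrivial $T^*$-invariant subspace $Y\subset H$ on which $T^*$ is either unitary or a scalar model operator, and that c.n.u.-ness and (C3) then finish the proof. No mechanism is actually supplied for this, and the natural candidate (take $Y$ to be the $H$-trace of $\RR$) produces the wrong subspace. Test it on the prototypical obstruction $T\cong S_\theta\oplus S^*$ (which satisfies (C1) and (C2) but not (C3)): there the unitary part of $T_\xi$ is a bilateral shift that genuinely straddles $H$ and $\II_\xi$, its shadow in $H$ is the $S^*$-summand, and on the corresponding $T^*$-invariant subspace $T^*$ acts as the unilateral shift $S$ --- neither unitary nor a scalar model operator, so your dichotomy misses it. The subspace that violates (C3) is the \emph{orthogonal complement} of that shadow, and splitting it off as a direct summand is exactly the nontrivial content of the proposition. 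You flag the ``straddling'' configuration as the main obstacle and hope that maximality of $T_\xi|\II_\xi$ rules it out, but that appeal is unavailable: Lemma~\ref{le:properties of Txi} has ``$T_\xi$ completely nonunitary'' as a hypothesis (so invoking it here is circular, as is Lemma~\ref{le:factorization}(2), which needs $T_\xi$ to have a scalar characteristic function), and maximality of $T_\xi|\II_\xi$ is \emph{not} a consequence of (C1)--(C3) --- it is precisely the extra condition (C4) characterized later in Lemma~\ref{le:maximal isometry}. A smaller but real slip: the diagonalization $T_\xi=T_u\oplus A_\xi\oplus 1\oplus\cdots$ uses different orthogonal decompositions of the domain and the range, so it is not a reducing decomposition and one cannot read off unitary summands from it.

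By contrast, the paper's proof does concrete work that your sketch does not replicate: it passes to the Sz.-Nagy--Foias model of $T$, where the unitary part of any contractive dilation must reduce the residual part $\{0\}\oplus\Delta L^2(\bbC^2)$ of the minimal isometric dilation; condition (C2) gives $|\phi_1|^2+|\phi_2|^2=1$, hence $\Delta(e^{it})$ is a rank-one projection a.e., so such reducing subspaces are exactly $\Delta L^2(E)$; the space $\mathbf{K}_\xi$ is then identified explicitly via the wandering vector $\Theta_Tc_1\oplus\Delta c_2$ of $T_\xi|\II_\xi$, and the containment $\{0\}\oplus\Delta L^2(E)\subset\mathbf{K}_\xi$ forces $\phi_1=0$ a.e.\ on $E$, hence $\phi_1\equiv 0$ and $\phi_2$ inner, contradicting the hypotheses. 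If you want to avoid the functional model you would need a substitute for this identification of the residual part and of $\mathbf{K}_\xi$; the norm identities you propose on $K_\xi$ do not by themselves locate the unitary part or connect it to (C3).
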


\begin{proof}
Denote by $V$ the minimal isometric dilation of $T_\xi$, acting on the space $K\supset H$. Since $T_\xi$ is a minimal dilation of $T$, it follows easily that $V$ is also a minimal isometric dilation of $T$. 

We will use the Sz.-Nagy--Foias model of the contraction $T$, which is the space
\[
\mathbf{H}=(H^2\oplus \Delta L^2(\bbC^2)) \ominus \{\Theta_T h\oplus \Delta h: h\in H^2(\bbC^2)\}
\]
and the operator unitarily equivalent to $T$ is $\mathbf{T}=P_{\mathbf{H}}(S\oplus Z)|\mathbf{H}$. The minimal unitary dilation $\mathbf V$ is just $S\oplus Z$ acting on $\mathbf{K}=H^2\oplus \Delta L^2(\bbC^2)$, and its unitary part acts on the space $\{0\}\oplus \Delta L^2(\bbC^2)$. 
Let us denote by $\Omega$ the unitary that implements the equivalence; that is, $\Omega:K\to \mathbf{K}$, $\Omega(H)=\mathbf{H}$, $\Omega V=\mathbf{V}\Omega$.

If $T^n\to 0$ strongly, then the characteristic function of $T$ is given by~\eqref{eq:Theta_T}, with $|\phi_1|^2+|\phi_2|^2=1$ a.e. Then 
\[
\Theta_T^*\Theta_T= \begin{pmatrix}
|\phi_1|^2& \bar\phi_1 \phi_2\\
\bar\phi_2\phi_1 & |\phi_2|^2
\end{pmatrix}= \begin{pmatrix}
\bar\phi_1\\ \bar\phi_2
\end{pmatrix} \begin{pmatrix}
\phi_1&\phi_2
\end{pmatrix}
\]
is almost everywhere on $\bbT$ a one-dimensional projection in $\bbC^2$. Therefore $\Delta(e^{it})$ is also a one-dimensional projection a.e.
If we write  $J(e^{it})=\begin{pmatrix}
\bar\phi_1(e^{it})\\ \bar\phi_2(e^{it})
\end{pmatrix}:\bbC\to\bbC^2$, then the map $f\mapsto J(f)$ is a unitary operator from $L^2$ to $JL^2=\Delta L^2(\bbC^2)$. Moreover, $J$ intertwines multiplication with $e^{it}$ in the corresponding $L^2$ spaces.

Consider now the operator $\mathbf{T}_\xi$ corresponding in the Sz.-Nagy--Foias model to  $T_\xi$, that is, $\mathbf{T}_\xi =\Omega T_\xi\Omega^*$.
Its unitary part is a reducing subspace of the unitary part of $\mathbf{V}$, and thus has to be a reducing subspace of $\{0\}\oplus \Delta L^2(\bbC^2)$ with respect to $S\oplus Z$, which means a reducing subspace of $JL^2$ with respect to multiplication by $e^{it}$. Therefore it is $J(L^2(E))$ for some measurable subset $E\subset\bbT$, or, equivalently, $\Delta L^2(E)$.

Consider now the vector $e_\xi$ introduced in the previous section. Since $\|T_\xi e_\xi\|=\|e_\xi\|$, we must also have $T_\xi e_\xi=Ve_\xi$, and therefore 
\begin{equation*}
\mathbf{T}_\xi \Omega e_\xi=\mathbf{V}\Omega e_\xi =(S\oplus Z)e_\xi\in (S\oplus Z)\mathbf{H}\subset \mathbf{H}\oplus \{\Theta_T c_1\oplus \Delta c_2: c_1, c_2\in \bbC\}.
\end{equation*}

By~\eqref{eq:T xi}, $T_\xi e_\xi$ belongs to $H\oplus \bbC$ (it has no components on the subsequent copies of $\bbC$ in the formula of $K_\xi$), and the second component is $a_\xi\not=0$.
So the projection of $T_\xi e_\xi$ onto $\II_\xi$ is a nonzero vector on the first component of $\II_\xi$, which is a wandering vector for $T_\xi|\II_\xi$. Applying $\Omega$ to this projection, we obtain that a wandering vector for $\mathbf{T}_\xi|\Omega(\II_\xi)$ is of the form $\Theta_T c_1\oplus \Delta c_2$. After a change of basis in $\DD_T$, we may assume that $c_2=0$.

It follows then that $\mathbf{T}_\xi$ is the compression of $S\oplus Z$ to the space
\begin{equation}\label{eq:K_xi in the model}
\begin{split}
\mathbf{K}_\xi&= \mathbf{H}\oplus \left\{\Theta_T \begin{pmatrix}
h\\0
\end{pmatrix}\oplus \Delta \begin{pmatrix}
h\\ 0
\end{pmatrix}:  h\in H^2\right\}\\&= \mathbf{K}\ominus \left\{\Theta_T \begin{pmatrix}
0\\h
\end{pmatrix}\oplus \Delta \begin{pmatrix}
0\\ h
\end{pmatrix}:  h\in H^2\right\}.
\end{split}
\end{equation}

Now, if $\{0\}\oplus\Delta L^2(E)\subset \mathbf{K}_\xi$, it has to be orthogonal to $\Delta \left(\begin{smallmatrix}
0\\ H^2
\end{smallmatrix}\right)$, whence $\Delta(e^{it})$ must be a.e. on $E$ the projection on the first coordinate. That means that $\phi_1=0$ a.e. on $E$, whence $\phi_1\equiv0$, $\phi_2$ inner. This is excluded by  the last part of the hypothesis.
\end{proof}

\section{The final  result}

We need only one more ingredient to obtain the final result.

\begin{lemma}\label{le:maximal isometry}
Suppose $T$ is a c.n.u. contraction on $H$ that satisfies conditions (C1)--(C3).   The following are equivalent:
\begin{enumerate}
\item
$T_\xi|\II_\xi$ is a maximal isometry.
\item
For any $H'\subset H$ such that $TH'\subset H'$ and $T':=T|H'$ is a scalar model operator, we have $e_\xi\notin H'$.
\item
For any $H'\subset H$ such that $TH'\subset H'$ and $T':=T|H'$ is a scalar model operator, we have $e_\xi\notin \DD_{T'}$.
\end{enumerate}
\end{lemma}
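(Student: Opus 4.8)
The plan is to prove the cycle (1)$\Rightarrow$(2)$\Leftrightarrow$(3)$\Rightarrow$(1), treating (2)$\Leftrightarrow$(3) as a formality, reformulating (1) as a condition on a single invariant subspace of $T$, and concentrating the real work on the return implication. First I would dispose of (2)$\Leftrightarrow$(3): for any $T$-invariant $H'$ one has $\ker(D_{T|H'})=\{h\in H':\|Th\|=\|h\|\}=H'\cap\DD_T^\perp$, so $\DD_{T|H'}=H'\ominus(H'\cap\DD_T^\perp)$. Since $e_\xi\in\DD_T$ is orthogonal to $\DD_T^\perp$, its component in $H'\cap\DD_T^\perp$ vanishes; hence $e_\xi\in H'$ iff $e_\xi\in\DD_{T|H'}$, and the two conditions are identical.

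Next I would reformulate (1). The maximal invariant subspace on which $T_\xi$ is isometric is $\YY_0=\bigcap_{n\ge0}\ker(D_{T_\xi}T_\xi^n)$. From the decomposition $T_\xi=T_u\oplus A_\xi\oplus 1\oplus\cdots$ of Lemma~\ref{le:one dilations are T_xi} one reads off $\DD_{T_\xi}=\DD_{A_\xi}=\bbC f_\xi$, where $f_\xi\in\DD_T$ is the unit vector with $f_\xi\perp e_\xi$ (recall $\ker(I-A_\xi^*A_\xi)=\bbC e_\xi$). Because the $H$-component of $T_\xi^n(h\oplus c)$ is $T^nh$ and $f_\xi\in\DD_T\subset H$, a vector lies in $\YY_0$ iff its $H$-component $h$ satisfies $\langle T^nh,f_\xi\rangle=0$ for all $n$. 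Thus $\YY_0=\MM\oplus\II_\xi$, where
\[
\MM=\{h\in H:P_{\DD_T}T^nh\in\bbC e_\xi\ \text{for all}\ n\ge0\}=\Big(\bigvee_{n\ge0}T^{*n}f_\xi\Big)^{\!\perp}
\]
is a $T$-invariant subspace, and condition (1), namely $\YY_0=\II_\xi$, is equivalent to $\MM=\{0\}$.

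The implication (1)$\Rightarrow$(2) I would prove contrapositively: given an invariant $H'$ with $T|H'$ a scalar model operator and $e_\xi\in H'$, note that $\dim\DD_{T|H'}=\dim\overline{P_{\DD_T}H'}$ (from $\|D_{T|H'}h\|=\|D_Th\|$ and injectivity of $D_T$ on $\DD_T$), so $P_{\DD_T}H'$ is one-dimensional; containing $e_\xi=P_{\DD_T}e_\xi$, it equals $\bbC e_\xi$, whence $H'\subset\MM$ and $\MM\ne\{0\}$. For the return implication (2)$\Rightarrow$(1), i.e. $\MM\ne\{0\}\Rightarrow\neg$(2), I would take $H'=\MM$: when $\MM\ne\{0\}$, $\YY_0=\MM\oplus\II_\xi$ is an isometric invariant subspace properly containing $\II_\xi$, hence by Lemma~\ref{le:properties of Txi}(2) a shift of multiplicity one; compressing this shift to $\MM=\YY_0\ominus\II_\xi$ and applying Beurling's theorem shows $T|\MM$ is unitarily equivalent to a scalar model operator $S_{\theta_0}$. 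A short computation, using $T_\xi^*\delta_1=a_\xi\xi$ for the wandering vector $\delta_1$ of $\II_\xi$, identifies its defect line as $\DD_{T|\MM}=\bbC\,P_\MM\xi$. So everything reduces to showing $e_\xi\in\MM$ (equivalently $P_\MM\xi\in\bbC e_\xi$); then $H'=\MM$ witnesses $\neg$(2) and closes the cycle.

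The main obstacle is precisely this last step, $\MM\ne\{0\}\Rightarrow e_\xi\in\MM$, which is where condition (C3) must be used; equivalently one must show that the $T^*$-invariant subspace $\FF=\MM^\perp=\bigvee_{n\ge0}T^{*n}f_\xi$ is orthogonal to $\DD_{T^*}$ (i.e. $T^*|\FF$ is isometric) whenever $\FF\ne H$. My first approach would be by contradiction: if $e_\xi\notin\MM$ then $T^*|\FF$ is a non-isometric contraction with $\dim\DD_{T^*|\FF}=1$, and it is of class $C_{\cdot0}$ (its adjoint $P_\FF T|\FF$ is $C_{0\cdot}$, since $\|(P_\FF T|\FF)^nf\|=\|P_\FF T^nf\|\le\|T^nf\|\to0$), so its characteristic function is inner; one would then like to invoke Lemma~\ref{le:factorization}(1) to conclude that this inner function is a common inner divisor of $\tilde\phi_1,\tilde\phi_2$, contradicting the coprimality granted by (C3). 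The delicate point—and the reason a purely soft argument does not suffice—is that $T^*|\FF$ need not itself be a scalar model operator: one must verify that its co-defect $\DD_{(T^*|\FF)^*}$ is also one-dimensional before Lemma~\ref{le:factorization}(1) applies, whereas restricting to $\FF$ can a priori enlarge the co-defect (the factorization of $\tilde\Theta_T$ attached to $\MM$ suggests a two-dimensional intermediate space). Overcoming this is the crux, and I expect it requires descending to the explicit Sz.-Nagy--Foias model of $T$ from the proof of Proposition~\ref{pr:T xi completely nonunitary}, where $\DD_T$, $\DD_{T^*}$, $e_\xi$, $f_\xi$ and $\MM$ are all concrete, and using the coprimality of $\phi_1,\phi_2$ there to force the defect vector $P_\MM\xi$ onto the line $\bbC e_\xi$.
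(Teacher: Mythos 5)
Your reductions are sound and, up to the last step, run parallel to the paper's own argument: the equivalence (2)$\Leftrightarrow$(3) via $\DD_{T|H'}=H'\ominus(H'\cap\DD_T^\perp)$, the identification of the maximal isometric invariant subspace of $T_\xi$ as $\MM\oplus\II_\xi$ with $\MM=\{h\in H: P_{\DD_T}T^nh\in\bbC e_\xi \mbox{ for all } n\}$, the implication (1)$\Rightarrow$(2) through $H'\subset\MM$, and the recognition of $T|\MM$ as a scalar model operator via Lemma~\ref{le:properties of Txi}(2) and Beurling's theorem are all correct and are essentially the paper's steps in a different packaging. But the proposal stops exactly where the content of the lemma lies: you reduce everything to the assertion ``$\MM\ne\{0\}\Rightarrow e_\xi\in\MM$'' (equivalently $P_\MM\xi\in\bbC e_\xi$), call it the crux, and offer only a speculative plan --- pass to $\FF=\MM^\perp$, invoke (C3) and the coprimality of $\tilde\phi_1,\tilde\phi_2$ through Lemma~\ref{le:factorization}(1), and descend to the explicit Sz.-Nagy--Foias model --- which you yourself flag as blocked by the co-defect dimension issue. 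That is a genuine gap, not a deferred verification.

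The paper closes this step without (C3), without coprimality, and without the model; conditions (C1)--(C3) enter only through Proposition~\ref{pr:T xi completely nonunitary}, to make $T_\xi$ c.n.u.\ so that Lemma~\ref{le:properties of Txi}(2) applies. The mechanism is the defining property of $e_\xi$: by construction $\ker(I-A_\xi^*A_\xi)=\bbC e_\xi$, so $e_\xi$ spans the \emph{only} direction of $\DD_T$ along which $T_\xi$ acts isometrically. If $\YY\supsetneq\II_\xi$ is invariant and $T_\xi|\YY$ is an isometry, every $x\in\YY'=\YY\cap H$ satisfies $\|T_\xi x\|=\|x\|$; writing $x=u+v$ with $u\in\DD_T^\perp$, $v\in\DD_T$ and using the decomposition $T_\xi=T_u\oplus A_\xi\oplus 1\oplus\cdots$ from the proof of Lemma~\ref{le:one dilations are T_xi} gives $\|u\|^2+\|A_\xi v\|^2=\|u\|^2+\|v\|^2$, hence $v\in\bbC e_\xi$. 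The paper applies this to a vector of the one-dimensional, nonzero defect space $\DD_{T'}$ of $T'=T|\YY'$, which it places inside $\DD_T$, to conclude $\DD_{T'}=\bbC e_\xi$ and in particular $e_\xi\in\DD_{T'}$, contradicting (3); in your notation this is precisely the missing claim about $P_\MM e_\xi$. So the idea you lack is not a model-theoretic computation but the elementary observation that the defect direction of $T'$ sits in the isometric subspace $\YY$ and is therefore pinned to $e_\xi$ by $\ker(I-A_\xi^*A_\xi)=\bbC e_\xi$. (Your unease is not entirely misplaced: the paper's inclusion $\DD_{T'}\subset\DD_T$ is stated rather tersely, and making it precise is exactly the delicate point you circled around; but the paper's route stays entirely inside this two-dimensional linear algebra and never touches the characteristic function.)
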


\begin{proof}
(1)$\implies$(2). Suppose (1) is true, and let $H'\subset H$ such that  $TH'\subset H'$, $e_\xi\in H'$, and $T':=T|H'$ is a scalar model operator. Then $\DD_{T'}$ having dimension~1, is spanned by $e_\xi$.
It may then be checked that the space $\YY=\II_\xi\oplus H'$ is invariant with respect to $T_\xi$, and $T_\xi|\YY$ is an isometry that strictly extends $T_\xi|\II_\xi$. Therefore  $e_\xi\notin H'$.

(2)$\implies(3)$ is immediate. Let us assume that (3) is true, and suppose $\II_\xi\subset \YY\subset \KK_\xi$, $T_\xi\YY\subset \YY$, and $T_\xi|\YY$ is an isometry. If $\YY'=\YY\cap H\not=\{0\}$ and $T'=P_{\YY'}T'_\xi|\YY'$, then $T'_\xi$ is an isometric dilation of $T'$, which is a shift of multiplicity 1 by Lemma~\ref{le:properties of Txi}(2). Thus $T'$ is the compression of a shift of multiplicity one to a coinvariant subspace, which is precisely unitarily equivalent to a scalar model operator. 

Since $\DD_{T'}=\{x\in \YY':\|T'x\|<\|x\|\}$, we have $\DD_{T'}\subset \DD_T$. 
Suppose then $x\in \DD_{T'}$, $x=x_1+x_2$, with $x_1\in \ker T$, $x_2$ multiple of $e_\xi$. We have then
\[
\|x_1\|^2+\|x_2\|^2=\|x\|^2=\|T_\xi x\|^2= \|Tx_1\|^2+\|Tx_2\|^2\le \|x_2\|^2,
\]
 whence $x_1=0$. Therefore $x$ is a multiple of $e_\xi$, which contradicts assumption~(3). It follows that $\YY=\II_\xi$, ending the proof of the lemma.
\end{proof}

In the light of Lemma~\ref{le:maximal isometry}, we may now state the last necessary condition:
\smallskip

({\bf C4}) There exists $\eta\in\DD_T$ such that, if $\YY'\subset H$, $T\YY'\subset \YY'$, and $T':=T|\YY'$ is unitarily equivalent to a scalar model operator,  then $\eta\notin\YY'$. 
\smallskip

The desired characterization is then given by the next theorem.

\begin{theorem}\label{th:main}
Suppose $T$ is a c.n.u. contraction on $H$.  The following are equivalent:
\begin{enumerate}
\item
$T$ is unitarily equivalent to $X_b$ for some nonextreme function $b\not\equiv0$.

\item
$T$ satisfies conditions (C1)--(C4).

\end{enumerate}

\end{theorem}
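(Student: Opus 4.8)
The plan is to prove Theorem~\ref{th:main} by assembling the pieces already established in Sections 3--6, routing everything through the two reformulations and the analysis of $T_\xi$. Recall that by Lemma~\ref{le:dBR-NF} and Lemma~\ref{le:Y_b isometries}(1), $X_b$ is unitarily equivalent to $Y_b$, so statement (1) may be replaced throughout by ``$T$ is unitarily equivalent to $Y_b$ for some nonextreme $b\not\equiv0$,'' which is exactly the left-hand side of both Theorem~\ref{pr:reformulation} and Theorem~\ref{pr:reformulation 2}.

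For the direction (1)$\implies$(2), the conditions (C1) and (C2) are immediate from Lemma~\ref{le:Y_b isometries}(1) (the defect and kernel dimensions, and $Y_b\to0$ strongly), as already noted in Section~3. Condition (C3) is exactly Lemma~\ref{le:condition (2)}. For (C4), I would first observe that (C4) is precisely the statement ``there exists $\eta\in\DD_T$ satisfying condition (2) of Lemma~\ref{le:maximal isometry}.'' Assuming (1), Theorem~\ref{pr:reformulation 2} gives a $\xi\in\DD_T$ with $T_\xi$ completely nonunitary and $T_\xi|\II_\xi$ a maximal isometry; Lemma~\ref{le:maximal isometry} (the implication (1)$\implies$(2) there) then converts this into the assertion that $e_\xi\notin H'$ for every invariant $H'$ on which $T$ restricts to a scalar model operator. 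Taking $\eta=e_\xi$ yields (C4).

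For (2)$\implies$(1), suppose $T$ satisfies (C1)--(C4). By (C4) fix a unit vector $\eta\in\DD_T$ witnessing the condition. By Lemma~\ref{le:2 x 2 lemma} (after diagonalizing $T_d^*T_d$, as in the construction of Section~5) there is a unit vector $\xi\in\DD_T$ with $e_\xi=\eta$; note $\xi$ can be taken an eigenvector of $\DD_T$ so that Lemma~\ref{le:properties of Txi} applies. Since $T$ satisfies (C1)--(C3), Proposition~\ref{pr:T xi completely nonunitary} guarantees that $T_\xi$ is completely nonunitary. Condition (C4) says exactly that $e_\xi=\eta$ lies in no invariant subspace on which $T$ is a scalar model operator, i.e. condition (2) of Lemma~\ref{le:maximal isometry} holds; therefore, by that lemma, $T_\xi|\II_\xi$ is a maximal isometry. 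Thus both requirements in statement (2) of Theorem~\ref{pr:reformulation 2} are met, and Theorem~\ref{pr:reformulation 2} delivers a nonextreme $b$ with $T$ unitarily equivalent to $Y_b\cong X_b$.

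The one genuinely delicate point is the bookkeeping in the (2)$\implies$(1) direction, namely verifying that the quantifier structure of (C4)---which asserts the existence of a single good $\eta$---matches the quantifier needed to apply Proposition~\ref{pr:T xi completely nonunitary} (which holds for \emph{all} $\xi$) together with Lemma~\ref{le:maximal isometry} (which is stated for a \emph{fixed} $\xi$). I expect the main obstacle is simply confirming that the passage from $\eta$ to a compatible $\xi$ via Lemma~\ref{le:2 x 2 lemma} is legitimate---in particular that the eigenvector normalization used in Section~5 does not clash with the arbitrary $\eta$ supplied by (C4). The surjectivity clause in Lemma~\ref{le:2 x 2 lemma} (``for any $\eta$ there exists $\xi$ with $e_\xi=\eta$'') is exactly what resolves this, so once that is invoked the remaining implications are formal.
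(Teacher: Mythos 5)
Your proposal is correct and follows essentially the same route as the paper: (C1)--(C3) from Lemma~\ref{le:Y_b isometries} and Lemma~\ref{le:condition (2)}, (C4) via the dilation $T_\xi$ and Lemma~\ref{le:maximal isometry}, and the converse by choosing $\xi$ with $e_\xi=\eta$ through Lemma~\ref{le:2 x 2 lemma}, then combining Proposition~\ref{pr:T xi completely nonunitary}, Lemma~\ref{le:maximal isometry}, and Theorem~\ref{pr:reformulation 2}. The only cosmetic difference is that for (C4) you cite Theorem~\ref{pr:reformulation 2} where the paper re-invokes Lemma~\ref{le:one dilations are T_xi}(2) directly, which amounts to the same argument.
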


\begin{proof}

If $T$ is unitarily equivalent to $X_b$ for some nonextreme function $b\not\equiv0$, then (C1)--(C3) have already been proved. To prove (C4), 
note that, since $\mathbf{Y}_b$ is a dilation of $Y_b$ with $\dim \DD_{Y_b}=1$ and $\mathbf{Y}_b|\tilde{K}_b\ominus K_b$ is a maximal isometry, it follows from Lemma~\ref{le:one dilations are T_xi}(2) that  $\mathbf{Y}_b$ is unitarily equivalent to $T_\xi$, with $\xi\in\DD_{Y_b}$. Then (C4) follows from Lemma~\ref{le:maximal isometry}.

For the reverse implication, choose a vector $\xi$ such that $\eta=e_\xi$; its existence is ensured by Lemma~\ref{le:2 x 2 lemma}.
The dilation $T_\xi$  is a completely nonunitary contraction by Proposition~\ref{pr:T xi completely nonunitary}. Lemma ~\ref{le:maximal isometry} ensures that $T_\xi|\II_\xi$ is a maximal isometry, and then Theorem~\ref{pr:reformulation 2} implies that (1) is true. 
\end{proof}

Condition (C3) can be reformulated as 

\smallskip

({\bf C3${}'$}) There exists no subspace $\YY\subset H$ such that $T^*\YY\subset \YY$ and, if $T_\YY:=T^*|\YY$, then $\dim\DD_{T_\YY} = \dim\DD_{T^*_\YY}=1$. 

\smallskip

Indeed, we have $T_\YY^*{}^n=P_\YY T^n\to 0$ strongly by (a).
Similarly, condition (C4) can be reformulated as 
\smallskip

({\bf C4${}'$}) There exists $\eta\in\DD_T$ such that, whenever $\YY'\subset H$, $T\YY'\subset \YY'$, and, if $T':=T|\YY'$, $\dim\DD_{T'}=\dim\DD_{T'{}^*}=1$,  we have $\eta\notin\YY'$.

\section{Freedom in the choice of $b$}

A natural question when considering model theory is whether a given operator determines its model (up to some simple transformation). 
Let us then suppose that a contraction $T\in\LL(H)$ is unitarily equivalent to $X_{b_1}$ as well as to $X_{b_2}$ for some $b_1, b_2$ in the unit ball of $H^\infty$. 
Since $X_{b_1}^*$ and $X_{b_2}^*$ have to be unitarily equivalent, their characteristic functions must coincide.
Also, by looking at the dimensions of the defect spaces of $T$, it follows immediately that  $b_1, b_2$ are simultaneously extreme or nonextreme, so we have to discuss two cases.

If $b$ is extreme, then the characteristic function of $X_b^*$ is precisely $b$. So the answer is simple: if $T$ is unitarily equivalent to $X_{b_1}$ as well as to $X_{b_2}$, then $b_1=\kappa b_2$ for some unimodular constant $\kappa$.

If $b_1, b_2$ are nonextreme, the characteristic functions of $X_{b_1}^*$ and $X_{b_2}^*$ are given by Lemma~\ref{le:Y_b isometries}, and if they coincide we must have 
\begin{equation}\label{eq:coincidence of two a b}
\begin{pmatrix}
a_2\\ b_2
\end{pmatrix} = 
\begin{pmatrix}
\alpha & \beta\\ \gamma & \delta
\end{pmatrix} \begin{pmatrix}
a_1\\ b_1
\end{pmatrix}
\end{equation}
for some unitary constant matrix $\left(\begin{smallmatrix}
\alpha & \beta\\ \gamma & \delta
\end{smallmatrix}\right)$.
This is possible for rather different functions $b_1, b_2$, as shown by the following example. Take $b_1=z/\sqrt{2}$ (so $a_1=1/\sqrt{2}$), and $\alpha=\beta=\gamma=-\delta=1/\sqrt{2}$; it follows  that $b_2=\frac{1-z}{2}$. We have then $X_{b_1}$ unitarily equivalent to $X_{b_2}$, but $b_1$ is a constant multiple of an inner function, while $b_2$ is outer. There seems to be no simple criterion that could decide when $X_{b_1}$ unitarily equivalent to $X_{b_2}$ without involving the associated outer functions $a_1$ and $a_2$.

A natural question is then whether there exist  cases when, as in the extreme case, $b$ is uniquely determined up to a unimodular constant. 
If $X_{b_1}$ and $X_{b_2}$ are unitarily equivalent, then~\eqref{eq:coincidence of two a b} implies, in particular, that $a_2=\alpha a_1+\beta b_1$ is outer.
Conversely, suppose $b_1$ is given, $a_1$ is the associated outer function, and a certain combination $a_2=\alpha a_1+\beta b_1$ is outer. We may suppose $|\alpha|^2+|\beta|^2=1$; if we take $\gamma=\bar\beta$, $\delta=-\bar\alpha$, then $\left(\begin{smallmatrix}
\alpha & \beta\\ \gamma & \delta
\end{smallmatrix}\right)$ is unitary and $b_2$ defined by~\eqref{eq:coincidence of two a b} has the property that $X_{b_2}$ is unitarily equivalent to $X_{b_1}$.

We may then reformulate the last problem as follows:

\smallskip
{\bf Question:} Does there exist a nonextreme function $b$ such that, if $a$ is the associated outer function, then $\alpha a+\beta b$ outer implies $\beta=0$?

\end{document}